\newtheorem{lemma}{Lemma}[section]
\newtheorem{theorem}[lemma]{Theorem}
\newtheorem{corollary}[lemma]{Corollary}
\theoremstyle{remark}
\newtheorem{remark}[lemma]{Remark}
\theoremstyle{definition}
\newtheorem{definition}[lemma]{Definition}
\numberwithin{equation}{section}
\newcommand{\mb}{\mathbf}
\newcommand{\mc}{\mathcal}
\newcommand{\R}{{\mathbb R}}
\newcommand{\N}{{\mathbb N}}
\newcommand{\C}{{\mathbb C}}
\newcommand{\rst}[1]{\ensuremath{{\mathbin |}%
\raise-.5ex\hbox{$#1$}}} 
\newcommand{\bs}[1]{\boldsymbol{#1}}
\title[Stable blow up dynamics]{Stable blow up dynamics for energy supercritical wave equations}
\author{Roland Donninger}
\address{\'Ecole Polytechnique F\'ed\'erale de Lausanne, 
Department of Mathematics, Station 8, CH-1015 Lausanne, Switzerland}
\email{roland.donninger@epfl.ch}
\author{Birgit Sch\"orkhuber}
\address{Vienna University of Technology, Institute for Analysis and Scientific Computing,
Wiedner Hauptstra\ss e 8-10,  A-1040 Vienna, Austria}
\email{birgit.schoerkhuber@tuwien.ac.at}
\thanks{The second author acknowledges partial support from the Austrian Science Fund
(FWF), grants P22108, P23598, P24304, and I395; the Austrian-French Project of
the Austrian Exchange Service (\"OAD); and the Innovative Ideas
Program of Vienna University of Technology.}
\begin{document}

\maketitle

\begin{abstract}
We study the semilinear wave equation
\[ \partial_t^2 \psi-\Delta \psi=|\psi|^{p-1}\psi \]
for $p > 3$ with radial data in three spatial dimensions.
There exists an explicit solution which blows up at $t=T>0$ given by
\[ \psi^T(t,x)=c_p (T-t)^{-\frac{2}{p-1}} \]
where $c_p$ is a suitable constant.
We prove that the blow up described by $\psi^T$ is stable in the sense that there
exists an open set (in a topology strictly stronger than the energy) of radial initial data 
that lead to a solution which converges to $\psi^T$ as $t\to T-$ in the backward lightcone of
the blow up point $(t,r)=(T,0)$.
\end{abstract}

\section{Introduction}

We consider the Cauchy problem for the focusing semilinear wave equation
\begin{equation}
\label{eq:main} 
\partial_t^2 \psi -\Delta\psi=|\psi|^{p-1}\psi 
\end{equation}
for $\psi: I\times \R^3\to\R$, $I$ an interval, and $p>3$ fixed.
The conserved energy $\mc E$ associated to Eq.~\eqref{eq:main} is given by
\[ \mc E(\psi(t,\cdot),\partial_t \psi(t,\cdot))=\tfrac12\|(\psi(t,\cdot),
\partial_t \psi(t,\cdot))\|_{\dot H^1\times L^2(\R^3)}^2
-\tfrac{1}{p+1}\|\psi(t,\cdot)\|_{L^{p+1}(\R^3)}^{p+1}. \]
Following the usual terminology we call $\mc E(\psi(t,\cdot),\partial_t \psi(t,\cdot))$ the total energy and
$\frac12 \|(\psi(t,\cdot),\partial_t \psi(t,\cdot))\|_{\dot H^1\times L^2(\R^3)}^2$ the (free) energy of $\psi$.
Eq.~\eqref{eq:main} is invariant under the scaling transformation
\begin{equation}
\label{eq:scaling} 
\psi(t,x)\mapsto \psi_\lambda(t,x)
:=\lambda^{-\frac{2}{p-1}}\psi(\tfrac{t}{\lambda},\tfrac{x}{\lambda}),\quad \lambda>0 
\end{equation}
and the energy scales as 
\[ \mc E(\psi_\lambda(t,\cdot),\partial_t \psi_\lambda(t,\cdot))=
\lambda^{\frac{p-5}{p-1}}\mc E(\psi(\tfrac{t}{\lambda},\cdot),\partial_1 \psi(\tfrac{t}{\lambda},\cdot)). \]
This shows that Eq.~\eqref{eq:main} is energy subcritical for $1<p<5$, critical for $p=5$, and
supercritical for $p>5$.

It is well-known that Eq.~\eqref{eq:main} exhibits finite-time blow up for data with negative
energy \cite{Lev74}.
A more explicit way to obtain information on the blow up behavior is to look for self-similar
solutions, i.e., solutions which are invariant under the natural scaling \eqref{eq:scaling}.
After a time translation those solutions are of the form
\[ \psi(t,x)=(T-t)^{-\frac{2}{p-1}}f(\tfrac{x}{T-t}) \]
for a function $f: \R^3\to \R$ where the free constant $T>0$ is called the blow up time.
One expects that Eq.~\eqref{eq:main} admits many smooth self-similar solutions, even in the
radial context, see \cite{BizMaiWas07}.
The simplest self-similar solution is given by
\[ \psi^T(t,x):=\kappa_p^{\frac{1}{p-1}} (T-t)^{-\frac{2}{p-1}},\]
where $\kappa_p = \frac{2(p+1)}{(p-1)^2}$, i.e., in this case the function $f$ is just a 
constant.
We call $\psi^T$ the \emph{fundamental self-similar} or \emph{ODE blow up solution}.

In what follows we restrict ourselves to radial solutions and write $\psi(t,r)$ where
$r=|x|$.
It is natural to study self-similar solutions in the backward lightcone 
$\mc C_T:=\{(t,r): t\in (0,T), r\in (0,T-t)\}$ of the blow up
point $(T,0)$.
As a matter of fact, the Cauchy problem for the wave equation \eqref{eq:main} restricted
to $\mc C_T$ is a dynamical system of its own, decoupled and completely independent of 
the behavior outside of $\mc C_T$.
This is immediate by basic domain of dependence considerations and of great conceptual
importance for our approach.
Interestingly enough, the wave equation restricted to $\mc C_T$ displays many features
which are reminiscent of parabolic systems, e.g.~the problem is only well-posed
in forward time. 
Numerical studies \cite{BizChmTab04} suggest that $\psi^T$ describes the generic blow up
of the system.
More precisely, it is demonstrated numerically in \cite{BizChmTab04} that sufficiently large ``generically''
chosen data lead to a time evolution which converges to $\psi^T$ in $\mc C_T$ as $t\to T-$.
A first step in approaching this problem from a rigorous perspective is to study the stability
of $\psi^T$ which is the content of the present paper. 
In this respect it is illustrative to consider the (free) energy of $\psi^T$ in $\mc C_T$.
By scaling it is easy to see that
\[ \|(\psi^T(t,\cdot), \partial_t \psi^T(t,\cdot))\|_{\dot H^1\times L^2(B_{T-t})}^2\simeq (T-t)^{\frac{p-5}{p-1}} \]
where $B_{T-t}:=\{x\in\R^3: |x|<T-t\}$.
Consequently, in the supercritical case $p>5$, the singularity formation described by $\psi^T$
is not energy-trapping.
This suggests that self-similar blow up in the supercritical regime cannot be studied
in the energy topology.
Consequently, the choice of a suitable stronger topology is a crucial step in our construction.
Our main result may be formulated qualitatively as follows, see Theorem \ref{Th:Main} below
for the precise statement.

\begin{theorem}[Main theorem, qualitative version]
\label{thm:main1}
For any $p>3$ there exists an open set (in a suitable topology stronger than the energy) of
radial initial data such that the corresponding solution of Eq.~\eqref{eq:main} converges to
$\psi^T$ in $\mc C_T$ as $t\to T-$ where $T$ is a suitable blow up time, depending on the data.
In this sense, the blow up described by $\psi^T$ is stable.
\end{theorem}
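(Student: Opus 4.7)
The plan is to recast the stability problem in the light-cone $\mc C_T$ as a small-data global-in-time problem on a fixed cylinder by passing to self-similar variables. I would set $\tau=-\log(T-t)$ and $\rho=r/(T-t)$, and write $\psi(t,r)=(T-t)^{-\frac{2}{p-1}}\Psi(\tau,\rho)$, so that $\mc C_T$ becomes the half-cylinder $[\tau_0,\infty)\times[0,1]$ and $\psi^T$ is the constant static solution $\Psi_\ast=\kappa_p^{1/(p-1)}$. Writing $\Psi=\Psi_\ast+\phi$ and $\Phi=(\phi,\partial_\tau\phi)$, the Cauchy problem takes the abstract form
\[ \partial_\tau\Phi=\mc L\Phi+\mc N(\Phi),\qquad \Phi(\tau_0)=\Phi_0, \]
on a radial Sobolev space $\mc H$ on the unit ball $B_1\subset\R^3$. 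Since the energy is not trapping for $p>5$, $\mc H$ must be stronger than the energy space; concretely I would take $\mc H=H^k\times H^{k-1}(B_1)$ with $k$ strictly larger than the critical exponent $s_p=\tfrac{3}{2}-\tfrac{2}{p-1}$, so that $H^k(B_1)$ embeds into $L^\infty$ and the nonlinearity $\mc N$ is locally Lipschitz on $\mc H$. This $\mc H$ will be the topology appearing in Theorem \ref{thm:main1}.

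The heart of the proof is the linear theory for $\mc L$. Via a dissipativity estimate after a suitable equivalent norm on $\mc H$ and a Lumer--Phillips argument, $\mc L$ should generate a strongly continuous semigroup $S(\tau)$ on $\mc H$. The time-translation symmetry $T\mapsto T+\delta$ of the original equation passes, in similarity variables, to an eigenfunction of $\mc L$ at $\lambda=1$; this ``gauge'' instability is not a genuine dynamical instability, since it reflects the freedom in choosing the blow-up time. The key spectral claim is that $\lambda=1$ is a simple eigenvalue and the only spectral point of $\mc L$ in the closed right half-plane. The eigenvalue equation reduces to a second-order linear ODE on $(0,1)$ with regular singular points at $\rho=0$ and at the light-cone boundary $\rho=1$; one has to analyze the associated connection problem, likely by reducing to a hypergeometric equation, and rule out any further $\mc H$-admissible solution with $\operatorname{Re}\lambda>0$. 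I expect this spectral exclusion to be the main obstacle, as it requires both explicit ODE information and sharp endpoint regularity.

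Once the spectrum is under control, a Gearhart--Pr\"uss / resolvent-bound argument gives exponential decay of $S(\tau)$ on the stable subspace: if $P$ is the rank-one Riesz projection onto the $\lambda=1$ eigenspace, then
\[ \|S(\tau)(I-P)\Phi\|_{\mc H}\lesssim e^{-\omega_0\tau}\|\Phi\|_{\mc H} \]
for some $\omega_0>0$. To eliminate the single unstable direction I would promote the blow-up time $T$ to a modulation parameter: for initial data $(f,g)$ close in $\mc H$ to $(\psi^{T_0}(0,\cdot),\partial_t\psi^{T_0}(0,\cdot))$, the map $T\mapsto P\Phi_0(T)$ is (by transversality of the eigenfunction to the fixed subspace of data) locally invertible, so a $T=T(f,g)$ near $T_0$ can be chosen to remove the $P$-component of the initial perturbation.

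The closing step is a nonlinear fixed point in the Duhamel formulation
\[ \Phi(\tau)=S(\tau-\tau_0)\Phi_0(T)+\int_{\tau_0}^\tau S(\tau-\sigma)\mc N(\Phi(\sigma))\,d\sigma \]
on the exponentially weighted space
\[ X=\bigl\{\Phi\in C([\tau_0,\infty);\mc H):\sup_{\tau\geq\tau_0}e^{\omega_0\tau}\|\Phi(\tau)\|_{\mc H}<\infty\bigr\}, \]
running simultaneously with the choice of $T$ that kills the unstable projection (globally in $\tau$, not only at $\tau_0$, by an argument that absorbs the running $P$-component into a correction of $T$). Local Lipschitz control of $\mc N$ on $\mc H$ closes the contraction for sufficiently small data, producing a global-in-$\tau$ solution with $\|\Phi(\tau)\|_{\mc H}\lesssim e^{-\omega_0\tau}$. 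Translating back to the original variables, this decay is exactly the convergence of $\psi$ to $\psi^T$ in $\mc C_T$ as $t\to T-$ in the topology $\mc H$, which is the content of Theorem \ref{thm:main1}.
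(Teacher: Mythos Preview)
Your plan is essentially the paper's strategy: self-similar coordinates, semigroup generation via Lumer--Phillips in a higher-regularity Hilbert space, a spectral analysis that reduces to a hypergeometric connection problem and isolates the simple eigenvalue $\lambda=1$ coming from time translation, Gearhart--Pr\"uss for exponential decay on the stable subspace, and a fixed point in an exponentially weighted space with the blow-up time $T$ as a modulation parameter.

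Two execution points differ from your sketch and are worth flagging. First, the paper does not work in a generic $H^k\times H^{k-1}(B_1)$; instead it passes to the first-order variables $(T-t)^{2/(p-1)}\big((r\varphi)_t,(r\varphi)_r\big)$ and constructs a hand-crafted equivalent $H^1\times H^1$ norm on $(0,1)$ (built from the higher energy of the $1+1$ wave equation after the substitution $\tilde\varphi=r\varphi$) for which the free operator is dissipative with the sharp constant $-\tfrac{2}{p-1}$. This is what makes both the Lumer--Phillips step and the spectral gap transparent; with a generic Sobolev norm the dissipativity estimate is not automatic. Second, your first description of the modulation---choose $T$ so that $P\Phi_0(T)=0$---is not sufficient, since the nonlinearity feeds the unstable subspace; your parenthetical ``globally in $\tau$'' is the right instinct, and the paper implements it Lyapunov--Perron style: one first subtracts from the Duhamel formula the full correction $\bs P\big[\Phi_0+\int_0^\infty e^{-\tau'}\mc N(\Psi(\tau'))\,d\tau'\big]$ (which depends on the solution itself), solves the modified equation by contraction, and only afterwards shows that $T$ can be chosen to make this correction vanish.
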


\subsection{Brief history of the problem}
Needless to say that the semilinear wave equation \eqref{eq:main} has been the subject
of many studies and it is impossible to review the entire literature. 
Consequently, we focus on recent developments which are related to our work.
In particular the critical case $p=5$ attracted a lot of interest in
the recent past. For $p=5$ there exists a soliton solution $W$ which is the central object
of many works. On the one hand, the energy of $W$ yields a threshold for global existence, see
\cite{KenMer08}, \cite{DuyKenMer08}. 
On the other hand, rescalings of $W$ can be used to construct exotic solutions
\cite{KriSchTat09}, \cite{DonKri12}.
Furthermore, dynamics around the soliton were studied, see e.g.~\cite{KriSch07}, 
\cite{KriNakSch10}, \cite{KriNakSch11}, and remarkable classification theorems were proved
\cite{DuyKenMer11}, \cite{DuyKenMer12a}, \cite{DuyKenMer12b}, \cite{DuyKenMer12c}.
However, as far as the role of self-similar blow up is concerned, previous results
are mainly confined to the case $p\leq 3$, see, however, the very recent \cite{KilStoVis12}
which establishes blow up bounds in the entire subcritical regime $1<p<5$.
In \cite{MerZaa03}, \cite{MerZaa05} it is proved that any blow up solution
of Eq.~\eqref{eq:main} with $p\leq 3$ blows up at a universal rate which is given by
$\psi^T$, see also \cite{MerZaa05a}.
Moreover, profile convergence to $\psi^T$, i.e., the analogue of Theorem \ref{thm:main1} in
the (sub)conformal case $p\leq 3$, was recently proved by the authors \cite{DonSch12}.
We further remark that the corresponding problem in one space dimension is well-understood
\cite{MerZaa07}, \cite{MerZaa08}, \cite{CotZaa11}.
For the supercritical regime $p>5$, which is the main concern of the present paper, much less is known.
In general, the study of supercritical wave equations is only at its beginnings, see 
e.g.~\cite{KenMer11}, \cite{KilVis11}, \cite{Bul11}, and references therein for recent progress in the field.
Furthermore, analogues of Theorem \ref{thm:main1} were proved for the supercritical wave maps
and Yang-Mills problems \cite{Don11}, \cite{Don12}.
We would also like to emphasize that the proof of our main result involves the construction of large data solutions for
supercritical wave equations starting from an open set of initial data. 
In view of the ongoing efforts to understand large data dynamics in supercritical equations,
we believe that our result is also interesting from this perspective.

\subsection{Outline of the proof}
The proof is based on the general stability theory for self-similar solutions 
developed in \cite{DonSchAic12}, \cite{Don11}, \cite{DonSch12}, \cite{Don12}.
We work exclusively in self-similar coordinates $\tau=-\log(T-t)$, $\rho=\frac{r}{T-t}$
and restrict ourselves to the evolution in the lightcone $\mc C_T$.
Furthermore, we do not rely on any previous well-posedness theory for the wave equation.
In our approach, the required existence and uniqueness results follow automatically.
The main philosophy of our method is to take the self-similar coordinates as a starting point and to develop
the entire theory of the wave equation in these variables.
The main technical difficulty comes from the fact that the introduction of the coordinates $(\tau,\rho)$
destroys the self-adjoint structure of the problem which precludes the application of
standard spectral methods.
Instead, we rely on semigroup theory to study the evolution.
A first crucial step in this respect consists of identifying a suitable Hilbert space structure
that automatically yields sharp decay estimates for the free equation in the lightcone
$\mc C_T$.
This is the first point where we crucially depart from the proof of the (sub)conformal result \cite{DonSch12}
since for the latter the energy was sufficient for this purpose.
In the present work we have to require more regularity.
In a next step we linearize the problem around $\psi^T$.
A detailed spectral analysis and abstract results from semigroup theory yield
an almost sharp decay estimate for the linearized evolution.
The point here is that the clever choice of a Hilbert space allows us to avoid almost
any ``hard'' analysis of the resolvent and we can rely on abstract results.
This comes at the affordable price of an $\varepsilon$-loss
in the final decay estimate.
It is important to note that the obtained decay is always exponential. This is due to 
the self-similar coordinates.
Since exponential decay is reproduced by the Duhamel formula, the perturbative 
treatment of the nonlinear problem is in principle straightforward.
A complication arises from the fact that the blow up time $T$ is a free parameter and the
whole problem is time translation invariant.
This has to be accounted for by some kind of modulation theory.
We use an infinite-dimensional version of the Lyapunov-Perron method from dynamical systems
theory.
This means that we first modify the initial data in order to force convergence to $\psi^T$.
In a second step we then show that this modification can be removed by choosing the
blow up time $T$ accordingly. 

\subsection{Notation}

Throughout we assume $p$ to be a fixed real number with $p>3$.
For a closed linear operator $\bs L$ we write $\sigma(\bs L)$ and
$\sigma_p(\bs L)$ for the spectrum and point spectrum, respectively.
Furthermore, we set $\bf R_{\bs L}(\lambda):=(\lambda-\bs L)^{-1}$ for $\lambda \notin \sigma(\bs L)$.
The Fr\'echet derivative of a map $f$ is denoted by $Df$ and we also use the notation
$D_y f(x,y)$ for the partial Fr\'echet derivative with respect to the second variable.
As usual, $a\lesssim b$ means $a\leq cb$ for an absolute constant $c>0$ and we also write
$a\simeq b$ if $a\lesssim b$ and $b\lesssim a$.

\subsection{Higher energy norm -- local version}

We intend to study perturbations $\varphi$ of the fundamental self-similar solution $\psi^T$. 
Thus, we insert the ansatz $\psi=\psi^T+\varphi$ into Eq.~\eqref{eq:main} and obtain the
Cauchy problem
\begin{align}
\label{Eq:perturbation_fullcauchy}
\left \{ \begin{array}{l}
\varphi_{tt}-\varphi_{rr}-\frac{2}{r}\varphi_r-p(\psi^T)^{p-1} \varphi - N_T(\varphi)=0 \mbox{ in } \mc{C}_T \\
\left. \begin{array}{l} \varphi(0,r)=f(r)-\psi^T(0,r) \\
 \varphi_t(0,r)=g(r)-\psi^T_t(0,r) \end{array} \right \} \mbox{ for }r \in [0,T]
\end{array} \right .
\end{align}
for the perturbation $\varphi$ in the backward lightcone $\mc{C}_T$ of the blow up point 
$(T,0)$.
Here, $f$ and $g$ are the free initial data of the original problem and 
\[ N_T(\varphi)=|\psi^T+\varphi|^{p-1}(\psi^T+\varphi)-|\psi^T|^{p-1}\psi^T-p |\psi^T|^{p-1}\varphi \]
is the nonlinear remainder.
We intend to study equation \eqref{Eq:perturbation_fullcauchy} in a Hilbert space with an 
inner product which is associated to a conserved quantity 
of the \textit{free equation}
\begin{align}
\label{Eq:Free_wave} 
\varphi_{tt}-\varphi_{rr}-\frac{2}{r}\varphi_r=0.
\end{align}
Furthermore, we require a certain degree of regularity such that the nonlinearity is well-defined
in the respective Hilbert space.
The fact that we are considering the problem not in the whole space but on a bounded domain
introduces certain technical difficulties with respect to the choice of the function space. 
Quantities such as the energy associated to \eqref{Eq:Free_wave}  given by
\begin{align*}
 \int_0^{\infty} [ \varphi_t(t,r)^2 + \varphi_r(t,r)^2]r^2dr 
\end{align*}
do not necessarily define a \textit{local} norm due to the lack of a boundary condition at $r=0$.
In the (sub)conformal range \cite{DonSch12} we have dealt with this problem by transforming
\eqref{Eq:Free_wave} to the $1+1$ wave equation. Setting 
$\tilde \varphi := r \varphi$ yields $\tilde \varphi(t,0) = 0$ and \eqref{Eq:Free_wave} reads 
\begin{align}
\label{Eq:One_dim_wave}
\tilde \varphi_{tt} - \tilde \varphi_{rr} = 0.
\end{align}
For $p \leq 3$ the norm associated to a \textit{local} 
version of the energy of \eqref{Eq:One_dim_wave}, which is given by
\begin{align}
\label{Eq:Tilde_Energy}
 \int_0^R [ \tilde \varphi_t(t,r)^2 +  \tilde \varphi_r(t,r)^2 ]dr 
\end{align}
for some finite $R >0$, was suitable to study the problem. 
However, for $p > 3$ we need more regularity. Note that differentiating 
Eq.~\eqref{Eq:One_dim_wave} with respect to $r$ shows that $\tilde \varphi_r$ again satisfies the
one-dimensional wave equation. This immediately suggests to choose 
\[ \int_0^R [ \tilde \varphi_{rt}(t,r)^2 +  \tilde \varphi_{rr}(t,r)^2 ] dr. \]
However, $\tilde \varphi_r$ does not satisfy an appropriate boundary condition at the origin. 
Therefore, in order to obtain a local norm, we simply add the energy term \eqref{Eq:Tilde_Energy}.
This motivates the following definition.

\begin{definition}
For $R > 0$ we set $\mathcal {\tilde E}^h(R) := C^2[0,R] \times C^1[0,R]$ and define
\begin{align*}
\| (f,g) \|^2_{\mc E^h(R)}  & := \int_0^R |r f'(r) + f(r)|^2 dr + \int_0^R |rf''(r) + 2 f'(r)|^2 dr   \\
&+ \int_0^R r^2 |g(r)|^2 dr+ \int_0^R |r g'(r) +  g(r)|^2 dr.
\end{align*}
We denote by $\mathcal E^h(R)$ the completion of $\mathcal {\tilde E}^h(R)$ with respect to
$\| \cdot\|_{\mc E^h(R)}$ and refer to $(\mc E^h(R), \| \cdot\|_{\mc E^h(R)})$ as the \textit{local higher energy space}.
\end{definition}


Inserting $\psi^T$ shows that the fundamental self-similar solution blows up 
in the lightcone $\mc C_T$ 
with respect to the local higher energy norm, i.e.,
\begin{align*}
&\|(\psi^T(t,\cdot),\psi_t^T(t,\cdot))\|^2_{\mc E(T-t)}  \\ 
 & = \int_0^{T-t} |\psi^T(t,r)|^2 dr + \int_0^{T-t} |r^2 \psi_t^T(t,r)|^2 dr  + \int_0^{T-t} |\psi_t^T(t,r)|^2 dr  \\
& \simeq (T-t)^{\frac{p-5}{p-1}} + (T-t)^{-\frac{p+3}{p-1}} \simeq (T-t)^{-\frac{p+3}{p-1}}
\end{align*}
for $t\in [0,T)$.
\begin{theorem}[Main result, precise formulation]\label{Th:Main}
Fix $p\in\R$, $p > 3$. Choose $\varepsilon>0$ such that 
\[ \mu_p := \tfrac{2}{p-1}-\varepsilon  > 0 \]
and let $(f,g)$ be radial initial data with
\begin{align*}
\|(f,g) - (\psi^1(0,\cdot), \psi_t^1(0,\cdot))\|_{\mc E^h(\frac{3}{2})}
\end{align*}
sufficiently small. Then there exists a $T \in (\frac12,\frac32)$ such that the Cauchy problem
\begin{align*}
\left \{ \begin{array}{l}
\partial_t^2 \psi-\Delta \psi = |\psi|^{p-1} \psi  \\ 
\psi[0]=(f,g) 
\end{array} \right .
\end{align*}
has a unique radial solution $\psi: \mc{C}_T \to \R$ which satisfies 
\[ (T-t)^{\frac{p+3}{2(p-1)}}\|(\psi(t,\cdot),\psi_t(t,\cdot))-(\psi^T(t,\cdot),\psi^T_t(t,\cdot))\|_{\mc E^h(T-t)} 
\leq C_{\varepsilon} (T-t)^{\mu_p}\]
for all $t \in [0,T)$ and a constant $C_{\varepsilon} > 0$.
\end{theorem}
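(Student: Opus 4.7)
The strategy follows the general Lyapunov--Perron framework for self-similar solutions developed in the references cited above, adapted to the higher energy topology of Definition introduced for this paper.

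I would first pass to self-similar coordinates $\tau=-\log(T-t)$, $\rho=r/(T-t)$ and rescale the perturbation by $(T-t)^{2/(p-1)}$ so that $\psi^T$ becomes a $\tau$-stationary solution. Writing the rescaled perturbation as a first-order system $\Phi=(\phi,\dot\phi)$, the Cauchy problem \eqref{Eq:perturbation_fullcauchy} takes the autonomous form
\begin{equation*}
\Phi'(\tau)=\bs L\,\Phi(\tau)+\bs N(\Phi(\tau)),\qquad \Phi(0)=\Phi_0,
\end{equation*}
on a Hilbert space $\mc H$ defined on $\rho\in[0,1]$ whose norm is the natural self-similar avatar of $\mc E^h(T-t)$. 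The extra regularity built into $\mc E^h$ is engineered precisely so that the free piece $\bs L_0$ (obtained by dropping $\bs N$ and the potential) generates a $C_0$-semigroup on $\mc H$ with a good growth bound, and so that $\mc H$ embeds into a function space on which $u\mapsto|u|^{p-1}u$ is locally Lipschitz for every $p>3$. Since $\psi^T$ is constant in $r$, the potential $p(\psi^T)^{p-1}$ becomes, after rescaling, a bounded multiplication by a constant; hence $\bs L=\bs L_0+\bs L'$ also generates a semigroup.

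The heart of the argument is the spectral analysis of $\bs L$. The time-translation symmetry $T\mapsto T+s$ of the blow up family furnishes an explicit unstable eigenfunction of $\bs L$ at $\lambda=1$. I would show that this is the only spectrum in the half-plane $\{\text{Re}\,\lambda>-\mu_p\}$ by reducing the resolvent equation $(\lambda-\bs L)\bs u=\bs f$ to an ODE on $\rho\in[0,1]$, classifying solutions at the singular endpoints $\rho=0,1$ via Frobenius theory, and invoking a Fredholm-type argument to exclude further point spectrum. Combined with a Gearhart--Pr\"uss style semigroup result, this yields
\begin{equation*}
\|e^{\tau\bs L}(\mathrm{Id}-\bs P)\Phi\|_{\mc H}\lesssim e^{-\mu_p\tau}\|\Phi\|_{\mc H},
\end{equation*}
where $\bs P$ is the one-dimensional spectral projection onto the unstable eigenspace.

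For the nonlinear step, $\bs N$ is locally Lipschitz on $\mc H$ with quadratic control near $0$, so a standard Duhamel contraction in the Banach space $X=\{\Phi\in C([0,\infty),\mc H):\sup_{\tau\geq0}e^{\mu_p\tau}\|\Phi(\tau)\|_{\mc H}<\infty\}$ constructs a global-in-$\tau$ trajectory, provided the initial data are augmented by a correction $\bs C(\Phi_0)\in\mathrm{rg}\,\bs P$ that cancels the projection of the trajectory onto the unstable mode. Finally, $\bs C$ is eliminated by viewing it as a function of the blow up time $T$ and applying an implicit function / Brouwer fixed point argument to find $T\in(\tfrac12,\tfrac32)$ with $\bs C=0$; that this is possible uses that the time-translation parameter $T$ moves the projection onto the unstable direction nondegenerately. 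The main obstacle will be the spectral step: because the self-similar change of variables destroys the self-adjoint structure, no variational spectral theory is available on $\mc H$, and eigenvalues in the closed right half-plane other than $\lambda=1$ must be excluded by direct ODE analysis of the resolvent equation.
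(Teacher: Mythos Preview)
Your plan is correct and matches the paper's proof almost step for step: self-similar first-order reformulation, Lumer--Phillips for the free part, compact perturbation by the potential, ODE/Frobenius (hypergeometric) analysis to isolate the single unstable eigenvalue $\lambda=1$, Gearhart--Pr\"uss for the linear decay, Duhamel contraction in the weighted space $X$, and a Lyapunov--Perron correction removed by varying $T$. The only cosmetic differences are that the paper takes $(\phi_1,\phi_2)=((r\varphi)_t,(r\varphi)_r)$ (rescaled) rather than $(\phi,\dot\phi)$, which turns the potential into a compact integral operator rather than a multiplication, and the one-dimensional ``Brouwer/implicit function'' step is carried out simply via the intermediate value theorem.
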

\subsection{First order formulation in similarity coordinates}

We proceed as in \cite{DonSch12}, i.e., we choose variables
$$\varphi_1=(T-t)^{\frac{2}{p-1}} (r\varphi)_t, \quad \varphi_2=(T-t)^{\frac{2}{p-1}} (r\varphi)_r$$
and transform the resulting first order system to similarity coordinates $(\tau, \rho)$ defined by
\[\rho = \frac{r}{T-t}, \quad \tau = - \log (T-t).\]
Setting
\[\phi_j(\tau,\rho) := \varphi_j(T-e^{-\tau},e^{-\tau} \rho)\] 
for $j=1,2$ and recalling that $\partial_t = e^{\tau} (\partial_{\tau} + \rho \partial_{\rho})$ 
and $\partial_r = e^{\tau} \partial_{\rho}$ the resulting system of equations read
\begin{align}\label{eq:nonlinear_firstorder_css}
\left \{ \begin{array}{l}
\left. \begin{array}{l}
\partial_\tau \phi_1  =  -\rho \partial_\rho \phi_1 + \partial_\rho  \phi_2 - \frac{2}{p-1} \phi_1    \\
\phantom{10mm} + p\kappa_p  \int_0^\rho \phi_2(\tau,s)ds + \rho N \left(\rho^{-1} \int_0^\rho \phi_2(\tau,s)ds\right)
 \\
\partial_\tau \phi_2=  -\rho \partial_\rho \phi_2 + \partial_\rho  \phi_1 - \frac{2}{p-1} \phi_2 \end{array} \right \}
\mbox{ in }\mc{Z}_T \\
\left. \begin{array}{l}
\phi_1(-\log T,\rho)= \rho (T^{\frac{p+1}{p-1}}g(T\rho)-\tfrac{2}{p-1}\kappa_p^{\frac{1}{p-1}} )\\
\phi_2(-\log T,\rho)=T^{\frac{2}{p-1}} ( T \rho f'(T\rho)+f(T\rho) )-  \kappa_p^{\frac{1}{p-1}}
\end{array} \right \}
\mbox{ for } \rho \in [0,1]
\end{array} \right .
\end{align} 
where $\mc{Z}_T:=\{(\tau,\rho): \tau> -\log T, \rho \in [0,1]\}$
and 
\begin{equation}
\label{eq:N}
N(x)=|\kappa_p^{\frac{1}{p-1}}+x |^{p-1}  (\kappa_p^{\frac{1}{p-1}}+x )-\kappa_p^{\frac{p}{p-1}} - p \kappa_p x.
\end{equation}
It is important to note that the original field can be reconstructed by 
\begin{align}
\label{Eq:ReconstructField}
\begin{split}
\psi(t,r) &= \psi^T(t,r) + (T-t)^{-\frac{2}{p-1}} r^{-1} \int_0^r \phi_2(-\log(T-t),\tfrac{r'}{T-t}) dr', \\
\psi_t(t,r)&= \psi_t^T(t,r) + (T-t)^{-\frac{2}{p-1}} r^{-1} \phi_1(-\log(T-t),\tfrac{r}{T-t}).
\end{split}
\end{align}
\section{Linear perturbation theory}

This section addresses the linearized problem, i.e., we drop the nonlinearity in \eqref{eq:nonlinear_firstorder_css}
and study the resulting equation as an abstract Cauchy problem in a suitable function space that will be introduced in the following. 

\subsection{The function space}

As usual, we endow $H^1(0,1)^2$ with the norm
\[ \|\bs u\|^2 := \|u_1\|^2_{H^1} +  \|u_2\|^2_{H^1}\]
where
\[\| u \|^2_{H^1} = \int_0^1 |u(\rho)|^2 d\rho + \int_0^1 |u'(\rho)|^2 d\rho\]
is induced by the standard inner product. 
We set $\mathcal H=\{u\in H^1(0,1): u(0)=0\}\times H^1(0,1)$. Now consider the sesquilinear form
$$(\bs u, \bs v)_{\bs 1} := (u_1(1) + u_2(1)) \overline{ (v_1(1) + v_2(1))} + 
\int_0^1 u_1'(\rho) \overline{v_1'(\rho)} d\rho + \int_0^1 u_2'(\rho) \overline{v_2'(\rho)}d\rho$$
and the associated quantity
$$ \|\bs u\|^2_{\bs 1} := (\bs u, \bs u)_{\bs 1} = |u_1(1) + u_2(1)|^2 + \|u_1'\|^2_{L^2} + \|u_2'\|^2_{L^2}.$$
\begin{lemma}\label{le:equiv_norms}
The quantity $\|\cdot \|_{\bs 1}$ defines a norm on $\mc H$ which is equivalent to  $\| \cdot\|$.
\end{lemma}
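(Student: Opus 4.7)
The plan is to verify that $\|\cdot\|_{\bs 1}$ is indeed a norm on $\mc H$ and then establish the two-sided bound $\|\bs u\|_{\bs 1}\simeq \|\bs u\|$.

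\textbf{Norm property.} Sesquilinearity of $(\cdot,\cdot)_{\bs 1}$ and the Cauchy--Schwarz inequality in the standard components give that $\|\cdot\|_{\bs 1}$ is a seminorm (homogeneity and triangle inequality). For positive definiteness, suppose $\|\bs u\|_{\bs 1}=0$. Then $u_1'=u_2'=0$ in $L^2$, so $u_1$ and $u_2$ are constants. The boundary condition $u_1(0)=0$ forces $u_1\equiv 0$, hence $u_1(1)=0$; the remaining term $|u_1(1)+u_2(1)|=0$ then yields $u_2(1)=0$, whence $u_2\equiv 0$.

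\textbf{Bound $\|\bs u\|_{\bs 1}\lesssim \|\bs u\|$.} The only nontrivial contribution is the boundary term. By the one-dimensional Sobolev embedding $H^1(0,1)\hookrightarrow C[0,1]$ we have $|u_j(1)|\lesssim \|u_j\|_{H^1}$ for $j=1,2$, so that $|u_1(1)+u_2(1)|^2\lesssim \|\bs u\|^2$, while the remaining terms $\|u_j'\|_{L^2}^2$ are already dominated by $\|\bs u\|^2$.

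\textbf{Bound $\|\bs u\|\lesssim \|\bs u\|_{\bs 1}$.} This is the step that actually uses the boundary condition and will be the main (albeit mild) obstacle. For $u_1$ we use that $u_1(0)=0$ implies $u_1(\rho)=\int_0^\rho u_1'(s)ds$, giving both the pointwise bound $|u_1(1)|\leq \|u_1'\|_{L^2}$ and (via Poincar\'e) $\|u_1\|_{L^2}\lesssim \|u_1'\|_{L^2}$. For $u_2$ we cannot use Poincar\'e directly since no boundary value is imposed, so we extract $u_2(1)$ from the norm by writing
\[ |u_2(1)|\leq |u_1(1)+u_2(1)|+|u_1(1)|\lesssim \|\bs u\|_{\bs 1}, \]
and then use the fundamental theorem of calculus together with Cauchy--Schwarz:
\[ |u_2(\rho)|\leq |u_2(1)|+\|u_2'\|_{L^2}\lesssim \|\bs u\|_{\bs 1} \quad \text{for all } \rho\in[0,1], \]
which yields $\|u_2\|_{L^2}\lesssim \|\bs u\|_{\bs 1}$. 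Combining these bounds with $\|u_j'\|_{L^2}\leq \|\bs u\|_{\bs 1}$ gives $\|\bs u\|\lesssim \|\bs u\|_{\bs 1}$ and completes the proof.
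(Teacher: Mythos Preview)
Your proof is correct and follows essentially the same route as the paper's: Sobolev embedding for the easy direction, and for the harder direction the Poincar\'e-type bound on $u_1$ via $u_1(0)=0$ combined with control of $u_2$ through the boundary term $|u_1(1)+u_2(1)|$ and the fundamental theorem of calculus. The only cosmetic difference is that the paper first bounds $|u_1(\rho)+u_2(\rho)|$ pointwise and then subtracts $u_1$, whereas you first isolate $|u_2(1)|$ and then propagate; both variants are equivalent.
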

\begin{proof}
The map $\|\cdot \|_{\bs 1}$ indeed defines a norm on $\mc H$ since $\|\bs u\|_{\bs 1} = 0$ implies $u_1=c_1$, $u_2 = c_2$
for constants $c_1,c_2$ as well as $u_1(1)=-u_2(1)$. 
The boundary condition $u_1(0)=0$ shows that $c_1 = 0$ and thus, $c_2 =0$.

Next, we prove equivalence of the norms. Using the fact that $\|u_j\|_{L^{\infty}} \lesssim \|u_j\|_{H^1}$ for $j=1,2$ we immediately
obtain
\[ \| \bs u \|^2_{\bs 1}\lesssim |u_1(1)|^2 +|u_2(1)|^2 + \|u_1' \|_{L^2}^2 + \|u_2' \|_{L^2}^2 \lesssim \|u_1\|^2_{H^1} + \|u_2\|^2_{H^1} \lesssim \| \bs u \|^2. \]
In order to prove the reverse inequality we require estimates for the $L^2$-norms of the individual components. 
By using the fundamental theorem of calculus for absolutely continuous functions, the boundary condition for $u_1$, 
and the Cauchy-Schwarz inequality, we obtain
\[ |u_1(\rho)| \leq \int_0^{\rho} |u_1'(s)|ds \leq \|u_1'\|_{L^2}.\]
Squaring and integrating yields $\|u_1\|_{L^2} \leq \|u_1'\|_{L^2}$. To derive a similar estimate for $\|u_2\|_{L^2}$ we use the identity
\[ \int_{\rho}^1 u_j'(s) ds = u_j(1) - u_j(\rho) \]
for $j=1,2$ to infer that
\begin{align*}
 |u_1(\rho) + u_2(\rho)| & \leq  |u_1(1) + u_2(1)| + \int_{\rho}^1 |u_1'(s)| ds + \int_{\rho}^1 |u_2'(s)| ds  \\
 & \leq |u_1(1) + u_2(1)| +
\|u_1'\|_{L^2} +\|u_2'\|_{L^2} 
\end{align*}
by Cauchy-Schwarz. Hence,
\begin{align*}
|u_2(\rho)| & = |u_2(\rho)+u_1(\rho) -u_1(\rho)| \leq |u_2(\rho)+u_1(\rho)| + |u_1(\rho) | \\
 &\leq |u_1(1) + u_2(1)| + 2 \|u_1'\|_{L^2} +\|u_2'\|_{L^2} 
\end{align*}
where we used the above estimate for $u_1$. Squaring and integrating yields 
\begin{align*}
& \|u_2\|^2_{L^2} \lesssim  |u_1(1) + u_2(1)|^2 + \|u_1'\|^2_{L^2} + \|u_2'\|^2_{L^2} \lesssim \| \bs u \|^2_{\bs 1}.
\end{align*}
We conclude that
\begin{align*}
 \| \bs u\|^2 = \|u_1\|^2_{L^2} +  \|u_1'\|^2_{L^2} + \|u_2\|^2_{L^2} + \|u_2'\|^2_{L^2} \lesssim \| \bs u \|^2_{\bs 1}.
\end{align*}
\end{proof}

\subsection{Operator formulation -- well-posedness of the linearized problem}
In correspondence with the right-hand side of the linearization of Eq.~\eqref{eq:nonlinear_firstorder_css} we define the 
operators $(\bs{\tilde{L}}_0, \mc D(\bs{\tilde{L}}_0))$ and $\bs L' \in \mathcal{B}(\mathcal{H})$ by
\[\bs{\tilde{L}}_0\bs{u}(\rho):=\left ( \begin{array}{c}u_2'(\rho)-\rho u_1'(\rho)  \\ 
u_1'(\rho)-\rho u_2'(\rho)\end{array}  \right)-\tfrac{2}{p-1}\bs{u}(\rho)  \]
where $\mc D(\bs{\tilde{L}}_0):=\{\bs{u} \in C^2[0,1] \times C^2[0,1]: u_1(0)=0, u_2'(0) = 0\}$ and
\[ \bs L' \bs{u}(\rho):=\left ( \begin{array}{c} p\kappa_p \int_0^\rho u_2(s)ds 
\\ 0 \end{array} \right ).\] 
It follows by inspection that $\bs{\tilde L}_0$ has range in $\mc H$.
It is also immediate that $\bs{\tilde L}_0$ is densely defined in $\mc H$.
Furthermore, by exploiting the compactness of the embedding 
$H^1(0,1) \hookrightarrow L^2(0,1)$ it is easy to see that $\bs L'$ is a compact operator.
\begin{lemma}
\label{L_semigroup}
The operator $(\bs{\tilde{L}}_0, \mc D(\bs{\tilde{L}}_0))$ is closable and we denote its closure by 
$(\bs L_0, \mc D(\bs L_0))$. Consequently, 
\[\bs L := \bs L_0 + \bs L', \quad\mc D(\bs L) = \mc D(\bs L_0) \] 
is a well-defined closed linear operator and $\bs u \in \mc D(\bs L)$ implies that 
$u_j \in C[0,1] \cap C^1[0,1)$ for $j=1,2$ with the boundary conditions $u_1(0)=u_2'(0)=0$.

Furthermore, $\bs L$ is the generator of a strongly continuous one-parameter semigroup 
$\mb S: [0,\infty) \to \mathcal{B}(\mathcal{H})$
which satisfies
\begin{align}
\label{Est:Semigroup_L}
 \|\mb S(\tau) \bs u\| \leq  M e^{\omega \tau} \|\bs u\|
\end{align}
for all $\tau \geq 0$, a constant $M \geq 1$, and a $p$-dependent exponent $\omega > 0$.
\end{lemma}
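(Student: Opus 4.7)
The plan is to invoke the Lumer--Phillips theorem on $\mathcal H$ equipped with the equivalent inner product $(\cdot,\cdot)_{\bs 1}$ from Lemma \ref{le:equiv_norms}, which was designed precisely so that the boundary terms produced by $\bs{\tilde L}_0$ can be controlled. First, I would verify dissipativity of $\bs{\tilde L}_0-\omega_0 \bs I$ for a suitable $p$-dependent $\omega_0$. For $\bs u\in\mathcal D(\bs{\tilde L}_0)$, computing $\mathrm{Re}\,(\bs{\tilde L}_0\bs u,\bs u)_{\bs 1}$ requires integration by parts on each of the terms $\int_0^1(\bs{\tilde L}_0\bs u)_j'\overline{u_j'}\,d\rho$ together with the point-value contribution at $\rho=1$. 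The boundary condition $u_1(0)=0$ kills the $\rho=0$ contribution of $u_1$, while $u_2'(0)=0$ kills the corresponding contribution for $u_2$. At $\rho=1$, the symbol of the drift is the degenerate matrix with rows $(-1,1)$ and $(1,-1)$, whose kernel is spanned by $(1,1)^T$; the resulting boundary term combines with the $|u_1(1)+u_2(1)|^2$ piece of the norm to yield a manifestly non-positive contribution. The remaining interior pieces are controlled by $\omega_0\|\bs u\|_{\bs 1}^2$ using Cauchy--Schwarz and the $-\tfrac{2}{p-1}\bs u$ term.

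Second, I would establish the range condition: for some (hence all large) $\lambda>\omega_0$, the range of $\lambda-\bs{\tilde L}_0$ is dense in $\mathcal H$. I would solve $(\lambda-\bs{\tilde L}_0)\bs u=\bs f$ pointwise by diagonalizing via $v_\pm=u_1\pm u_2$, which decouples the system into two scalar first-order ODEs, $(\lambda+\tfrac{2}{p-1})v_+ - (1-\rho)v_+'=f_1+f_2$ and $(\lambda+\tfrac{2}{p-1})v_- + (1+\rho)v_-'=f_1-f_2$, each of which is integrable explicitly by the variation-of-constants formula along the transport characteristic, and the boundary conditions $u_1(0)=0$, $u_2'(0)=0$ uniquely select an $H^1$-regular solution because the degeneracy of the coefficient of $v_+'$ sits at $\rho=1$ (not at the boundary where data is needed). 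Taking $\bs f$ in a dense subset of $\mathcal H$ of smooth data then gives density of the range.

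Given these two ingredients, Lumer--Phillips implies that $\bs{\tilde L}_0-\omega_0\bs I$ is closable with m-dissipative closure, so $\bs L_0$ generates a strongly continuous semigroup on $\mathcal H$ with growth bound $\omega_0$. Since $\bs L'\in\mathcal B(\mathcal H)$, the bounded perturbation theorem then yields that $\bs L=\bs L_0+\bs L'$ also generates a strongly continuous semigroup $\mb S(\tau)$ satisfying \eqref{Est:Semigroup_L} with $\omega=\omega_0+\|\bs L'\|$. Finally, to establish the stated regularity of $\mathcal D(\bs L)=\mathcal D(\bs L_0)$, I would use that $\bs L_0\bs u=\bs f\in\mathcal H$ forces $\bs u$ to solve the diagonalized ODEs weakly on $[0,1]$; away from $\rho=1$ the drift is non-degenerate so $u_j\in C^1[0,1)$ by ODE regularity, while $H^1\hookrightarrow C[0,1]$ gives continuity up to the endpoint; the boundary conditions $u_1(0)=0$, $u_2'(0)=0$ pass to the closure because the trace $u\mapsto u(0)$ is continuous on $H^1(0,1)$ and because $u_2'(0)=0$ is detected by testing the ODE at $\rho=0$.

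The main obstacle is the dissipativity computation in Step 1: it is only the carefully chosen combination $|u_1(1)+u_2(1)|^2+\|u_1'\|_{L^2}^2+\|u_2'\|_{L^2}^2$ that produces a boundary term of the right sign at $\rho=1$, and one has to check that the mixed terms coming from the $u_1\leftrightarrow u_2$ coupling in $\bs{\tilde L}_0$ do not destroy this, which amounts to verifying a small algebraic identity for the symbol of the drift at $\rho=1$. All subsequent steps are then of an essentially standard semigroup-theoretic flavor.
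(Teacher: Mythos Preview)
Your plan is correct and matches the paper's architecture: Lumer--Phillips applied in the $\|\cdot\|_{\bs 1}$ inner product, followed by the bounded perturbation theorem. Two execution details differ. First, the dissipativity step is cleaner than you anticipate: after the integration by parts one finds that the point-value contribution vanishes identically (at $\rho=1$ the drift produces $u_2'(1)-u_1'(1)+u_1'(1)-u_2'(1)=0$ in the combined first slot) and the interior boundary terms assemble into $-\tfrac12|u_1'(1)-u_2'(1)|^2\le 0$, so one gets the sharp bound $\mathrm{Re}\,(\bs{\tilde L}_0\bs u,\bs u)_{\bs 1}\le -\tfrac{2}{p-1}\|\bs u\|_{\bs 1}^2$ with no auxiliary $\omega_0>0$ needed; this sharp constant is what is actually used downstream in Lemma~\ref{spectrum} to locate $\sigma(\bs L_0)$. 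Second, for the range condition the paper does not diagonalize via $v_\pm$ but instead eliminates $u_1$ through the second row and writes down an explicit antiderivative formula for $u_2$; your characteristic decoupling works equally well (note that only $u_1(0)=0$ is needed to fix the free constant, while $u_2'(0)=0$ then follows automatically from $f_1(0)=0$). For the domain regularity the paper argues by approximating sequences from $\mc D(\bs{\tilde L}_0)$ rather than by your ODE-regularity route, but both are standard.
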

\begin{proof}
We consider the Hilbert space $\mc H$ equipped with the norm $\| \cdot \|_{\bs 1}$. 
First, we show that $\bs{\tilde{L}}_0$ is a closable operator and its closure is the generator of a $C_0$-semigroup.
The next estimate is crucial for our approach.
By definition of $(\cdot|\cdot)_{\bs 1}$ we have
\begin{align*}
\mathrm{Re} (\bs{\tilde L}_0\bs u|\bs u)_{\bs 1}&=
\mathrm{Re}\int_0^1 [u_2''(\rho)-\rho u_1''(\rho)-u_1'(\rho)]\overline{u_1'(\rho)}d\rho \\
&\quad +\mathrm{Re}\int_0^1 [u_1''(\rho)-\rho u_2''(\rho)-u_2'(\rho)]\overline{u_2'(\rho)}d\rho
-\tfrac{2}{p-1}\|\bs u\|_{\bs 1}^2.
\end{align*}
Since $\mathrm{Re}(u'\overline{u})=\frac12 (|u|^2)'$, an integration by parts 
yields 
\[ \mathrm{Re} (\bs{\tilde L}_0\bs u|\bs u)_{\bs 1}\leq -\tfrac{2}{p-1}\|\bs u\|_{\bs 1}^2. \]

Next, we show that $\text{rg}(\lambda - \bs{\tilde{L}}_0)$ is dense for 
$\lambda := 1  - \tfrac{2}{p-1} > - \tfrac{2}{p-1}$. 
For arbitrary $\bs f =(f_1,f_2)\in \{(u_1,u_2)\in C^\infty[0,1]^2: u_1(0)=0\}$ (which is dense in $\mc H$) we set 
\[F(\rho) := f_1(\rho) + \rho f_2(\rho) + \int_0^{\rho} f_2(s) ds\] and define
\[u_1(\rho) := \rho u_2(\rho) - \int_0^{\rho} f_2(s) ds,  \quad u_2(\rho) := \frac{1}{1-\rho^2} \int_{\rho}^1 F(s) ds .\]
By Taylor's theorem it is immediate that $u_j \in C^2[0,1]$ for $j=1,2$ and 
we have $u_1(0) = 0$ as well as $u_2'(0) = -F(0) = 0$
which implies $\bs u =(u_1,u_2) \in \mc D(\bs{\tilde{L}}_0)$. 
A direct calculation shows that $(\lambda -\bs{\tilde{L}}_0) \bs u = \bs f$.
Consequently, the Lumer-Phillips Theorem (see \cite{EngNag00}, p.~83, Theorem 3.15) 
shows that $(\bs{\tilde{L}}_0, \mc D(\bs{\tilde{L}}_0))$ is closable and its closure $(\bs L_0, \mc D(\bs L_0))$ 
generates a strongly continuous 
one-parameter semigroup $\mb S_0: [0, \infty) \to \mc B(\mc H)$ which satisfies
\[ \| \mb S_0(\tau) \bs u \|_{\bs 1} \leq e^{-\frac{2}{p-1}\tau} \|\bs u\|_{\bs 1}\]
for $\bs u \in \mc H$.  Equivalence of the norms $\|\cdot \|_{\bs 1}$ and $\|\cdot \|$ on $\mc H$, 
which was shown in Lemma \ref{le:equiv_norms}, implies the existence of a constant $M  \geq 1$ such that 
\begin{align}\label{L0_semigroup}
 \| \mb S_0(\tau) \bs u \| \leq M e^{-\frac{2}{p-1}\tau} \|\bs u\|.
\end{align}

Next, we add the perturbation $\bs L' \in \mc B(\mc H)$ and set $\bs L:= \bs L_0 + \bs L'$. Boundedness of $\bs L'$ implies that 
$ \mc D(\bs L) = \mc D(\bs L_0)$. The Bounded Perturbation Theorem 
(see \cite{EngNag00}, p.~158) shows that $\bs L$ is the generator of a strongly continuous 
one-parameter semigroup $\mb S: [0, \infty) \to \mc B(\mc H)$ satisfying
\[ \| \mb S(\tau) \bs u \|  \leq M e^{ (M\|\bs L'\|-\frac{2}{p-1}) \tau} \|\bs u\|.\]

Finally, to characterize the generator in more detail assume that $\bs u \in \mc D(\bs L)=\mc D(\bs L_0)$. The fact that $u_j \in C[0,1]$ for $j=1,2$ and $u_1(0)=0$ follows immediately by Sobolev embedding since $\bs u \in \mc H$. By definition of the closure there exists a sequence 
$(\bs u_k) \subset \mc D(\bs{\tilde{L}}_0) \subset C^2[0,1] \times C^2[0,1]$ such that $\bs u_k \to \bs u$ and $\bs L_0 \bs u_k \to \bs L_0 \bs u$ in $\mc H$.
Sobolev embedding implies uniform convergence of the individual components and a suitable combination of the 
respective expressions shows that $(1-\cdot^2) u'_{1,k} \to (1-\cdot^2) u'_{1}$ and $(1-\cdot^2) u'_{2,k} \to (1-\cdot^2) u'_{2}$ uniformly.
We infer that for $j=1,2$,  $u'_{j,k}  \to u'_{j}$ in  $L^{\infty}(a,b)$ for any $(a,b] \subset (0,1)$ which shows that $u_j \in C^1[0,1)$.
As a consequence, $u'_{2,k}(\rho) \to u'_{2}(\rho)$ pointwise for $\rho \in [0,1)$ which yields the boundary condition $u_2'(0)=0.$
\end{proof}

\begin{corollary}\label{cor:wellposed_linearized}
The Cauchy problem \begin{equation*}
 \left \{ \begin{array}{l}
\frac{d}{d\tau}\Psi(\tau)=\bs L \Psi(\tau) \mbox{ for }\tau>0 \\
\Psi(0)=\bs{u} \in \mc D(\bs L)
          \end{array} \right .
\end{equation*}
has a unique solution $\Psi \in C^1([0,\infty), \mc H)$ which is given by
$ \Psi(\tau)=\mb S(\tau)\bs{u} $
for all $\tau \geq 0$.
\end{corollary}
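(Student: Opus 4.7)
The proposal is to reduce the corollary to standard $C_0$-semigroup theory applied to the generator $(\bs L, \mc D(\bs L))$ constructed in Lemma \ref{L_semigroup}. Since the substantive analytic work — closability of $\bs{\tilde L}_0$, the dissipativity estimate, range density via the explicit inversion of $\lambda - \bs{\tilde L}_0$, and the bounded perturbation step — has already been done there, no new analytic input is needed and the corollary is pure abstract nonsense.

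For existence I would simply invoke the classical fact that, whenever $\bs L$ generates a $C_0$-semigroup $\mb S$ and $\bs u \in \mc D(\bs L)$, the orbit $\tau \mapsto \mb S(\tau)\bs u$ is continuously differentiable from $[0,\infty)$ into $\mc H$, takes values in $\mc D(\bs L)$, and satisfies
\[ \tfrac{d}{d\tau}\mb S(\tau)\bs u = \bs L\mb S(\tau)\bs u = \mb S(\tau)\bs L\bs u, \qquad \mb S(0)\bs u = \bs u \]
(see, e.g., \cite{EngNag00}, Chapter II, Lemma 1.3). Setting $\Psi(\tau) := \mb S(\tau)\bs u$ therefore produces the required $C^1$-solution.

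For uniqueness I would use the standard semigroup commutation trick. Let $\Phi \in C^1([0,\infty), \mc H)$ be any solution of the Cauchy problem, fix $t>0$, and consider the auxiliary map $h:[0,t]\to\mc H$ defined by $h(\sigma) := \mb S(t-\sigma)\Phi(\sigma)$. Since $\Phi(\sigma) \in \mc D(\bs L)$ by hypothesis and $\mb S$ commutes with $\bs L$ on $\mc D(\bs L)$, the standard product rule for semigroup orbits yields
\[ h'(\sigma) = -\bs L\mb S(t-\sigma)\Phi(\sigma) + \mb S(t-\sigma)\bs L\Phi(\sigma) = 0. \]
Hence $h$ is constant on $[0,t]$ and $\Phi(t) = h(t) = h(0) = \mb S(t)\bs u$; as $t>0$ was arbitrary this forces $\Phi = \Psi$.

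I do not foresee any genuine obstacle. The only mild point to watch is that the product-rule computation requires $\Phi(\sigma) \in \mc D(\bs L)$ for all $\sigma$, but this is precisely what is built into the statement of the Cauchy problem — the equation $\Psi'(\tau) = \bs L\Psi(\tau)$ only makes sense pointwise when $\Psi(\tau)$ lies in $\mc D(\bs L)$ — so no extra regularity argument is needed.
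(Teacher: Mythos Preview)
Your proposal is correct and matches the paper's treatment: the corollary is stated without proof, as it is an immediate consequence of Lemma~\ref{L_semigroup} together with the standard well-posedness theory for abstract Cauchy problems governed by $C_0$-semigroup generators (precisely the result from \cite{EngNag00} that you cite). Your explicit write-up of the existence and uniqueness arguments is more detailed than anything the paper provides, but the underlying approach is identical.
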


\subsection{Spectral analysis of the generator}
The growth estimate for the semigroup $\mb S$ obtained in Lemma \ref{L_semigroup} by abstract results is not optimal. 
In order to refine \eqref{Est:Semigroup_L} we investigate the spectral properties of the generator.  
\begin{lemma}\label{spectrum}
We have $\sigma(\bs L) \subseteq \{\lambda \in \C: \mathrm{Re} \lambda \leq -\tfrac{2}{p-1}  \} \cup \{1\}$.
The spectral point $\lambda_{\bs g} =1$ is an eigenvalue and the associated one-dimensional geometric eigenspace is spanned by the symmetry mode
\begin{equation}\label{Eq:gaugemode}
\bs{g}(\rho):=\left ( \begin{array}{c}
\frac{p+1}{p-1} \rho  \\ 1 \end{array} \right ).
\end{equation}
\end{lemma}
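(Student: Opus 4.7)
The plan is to exploit the compact perturbation structure $\bs L=\bs L_0+\bs L'$ in order to reduce the claim in the half-plane $\{\mathrm{Re}\lambda>-\tfrac{2}{p-1}\}$ to a classification of eigenvalues, and then to perform that classification by transforming the eigenvalue equation into a hypergeometric ODE.

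For any $\lambda$ with $\mathrm{Re}\lambda>-\tfrac{2}{p-1}$, the semigroup estimate \eqref{L0_semigroup} guarantees $\lambda\notin\sigma(\bs L_0)$, so one may factor
\[ \lambda-\bs L \;=\; (\lambda-\bs L_0)\bigl[I-\mb R_{\bs L_0}(\lambda)\bs L'\bigr]. \]
Compactness of $\bs L'$ makes $\mb R_{\bs L_0}(\lambda)\bs L'$ compact for each such $\lambda$, and the analytic Fredholm theorem applied to the holomorphic family $\lambda\mapsto\mb R_{\bs L_0}(\lambda)\bs L'$ on the connected open set $\{\mathrm{Re}\lambda>-\tfrac{2}{p-1}\}$ implies that $\sigma(\bs L)$ in this region consists of isolated eigenvalues of finite algebraic multiplicity. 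A direct substitution using $\kappa_p=\tfrac{2(p+1)}{(p-1)^2}$ verifies $\bs L\bs g=\bs g$, so $1$ is indeed such an eigenvalue.

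To show it is the only one, I would convert $\bs L\bs u=\lambda\bs u$ into a scalar ODE. Setting $\alpha:=\lambda+\tfrac{2}{p-1}$ and $U(\rho):=\int_0^\rho u_2(s)\,ds$ (so that $U(0)=0$ and $U'=u_2$), the second component of the eigenvalue equation integrates, using $u_1(0)=0$, to
\[ u_1(\rho) \;=\; \rho U'(\rho)+(\alpha-1)U(\rho), \]
and substituting into the first component yields
\[ (1-\rho^2)U''(\rho)-2\alpha\rho\,U'(\rho)+\bigl[p\kappa_p-\alpha(\alpha-1)\bigr]U(\rho) \;=\; 0. \]
Under $z=\rho^2$ this becomes the hypergeometric equation with parameters $c=\tfrac12$, $a+b=\alpha-\tfrac12$, $ab=\tfrac14[\alpha(\alpha-1)-p\kappa_p]$, so $a,b=\tfrac12(\alpha-\tfrac12\pm\nu)$ where $\nu:=\sqrt{\tfrac14+p\kappa_p}$.

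The condition $U(0)=0$ selects (up to scale) the Frobenius branch $U(\rho)=\rho\cdot{}_2F_1(a+\tfrac12,b+\tfrac12;\tfrac32;\rho^2)$. The standard hypergeometric connection formula gives the leading behavior $U(\rho)\sim A+B(1-\rho^2)^{1-\alpha}$ near $\rho=1$; since $\mathrm{Re}\alpha>0$ throughout the region, the branch $(1-\rho^2)^{1-\alpha}$ prevents $u_1,u_2$ from lying in $H^1(0,1)$ unless $B=0$, i.e., unless the series terminates. Termination requires $b+\tfrac12=-n$ for some $n\in\N$, producing the sequence $\lambda_n=-\tfrac{2}{p-1}-\tfrac12+\nu-2n$. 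A short calculation yields $\tfrac32<\nu<\tfrac52$ for $p>3$, so only $n=0$ lies in the half-plane, and the algebraic identity $\bigl(\tfrac32+\tfrac{2}{p-1}\bigr)^2=\tfrac14+p\kappa_p$ shows $\lambda_0=1$. At $\lambda=1$ one has $b+\tfrac12=0$, the hypergeometric factor collapses to $1$, $U(\rho)=B\rho$, and back-substitution produces $\bs u=B\bs g$, confirming that the geometric eigenspace is one-dimensional.

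The principal technical obstacle is the endpoint analysis at $\rho=1$: one must carefully extract the two Frobenius branches of the hypergeometric solution at this regular singular point and rigorously rule out the $(1-\rho^2)^{1-\alpha}$ branch using the $H^1$-regularity built into $\mc D(\bs L)$. The remainder — the compact-perturbation reduction and the algebra identifying $\lambda=1$ — is comparatively routine.
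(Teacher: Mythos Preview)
Your proof is correct and follows essentially the same route as the paper: reduce to an eigenvalue problem in the half-plane $\{\mathrm{Re}\lambda>-\tfrac{2}{p-1}\}$ via the compact-perturbation structure, derive the hypergeometric ODE for $U=\int_0^\rho u_2$, and quantize using the connection formulae (the paper selects the regular branch at $z=1$ and connects to $z=0$, you go the other direction, but the content is identical and the quantization conditions agree). One small omission: the vanishing of your singular coefficient $B$ requires $a+\tfrac12\in-\N_0$ \emph{or} $b+\tfrac12\in-\N_0$, and you only treat the latter; you should dispose of the former case explicitly (it gives $\lambda=-\tfrac{2}{p-1}-\tfrac12-\nu-2n<-\tfrac{2}{p-1}$, hence excluded), and also note the degenerate log-case when $1-\alpha\in\Z$, as the paper does.
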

\begin{proof}
We set $\mc{S}:= \{\lambda \in \C: \text{Re} \lambda \leq -\tfrac{2}{p-1}  \} \cup \{1\}$. Let $\lambda \in \sigma(\bs L)$. 
If $\text{Re} \lambda \leq -\tfrac{2}{p-1}$ then $\lambda \in \mc{S}$ trivially, hence assume that $\text{Re} \lambda > -\tfrac{2}{p-1}$. 
We show that under this assumption $\lambda \in \sigma_p(\bs L)$ and $\lambda = 1$.

From \eqref{L0_semigroup} and standard results
from semigroup theory (see \cite{EngNag00}, p.~55, Theorem 1.10) we infer that
\[ \sigma(\bs L_0) \subseteq \{\lambda \in \C: \text{Re} \lambda \leq -\tfrac{2}{p-1} \}.\] 
In particular, the above assumption on $\lambda$ implies that $\lambda \not \in \sigma(\bs L_0)$. 
We use the identity 
\[\lambda - \bs L = [1 - \bs L'\mb R_{\bs L_0}(\lambda)](\lambda - \bs L_0)\]
which shows that $1 \in \sigma(\bs L'\mb R_{\bs L_0}(\lambda))$, hence $1$ is an eigenvalue of the
compact operator $\bs L'\mb R_{\bs L_0}(\lambda)$. Let $\bs f \in \mc H$ denote the corresponding eigenvector.
Setting $\bs u := \mb R_{\bs L_0}(\lambda) \bs f$ yields $\bs u \in \mc D(\bs L_0)=\mc D(\bs L)$,
$\bs u\not= \bs 0$,
 as well as $(\lambda - \bs L) \bs u = \bs 0$, and we conclude that $\lambda \in \sigma_p(\bs L)$. 

The eigenvalue equation $(\lambda - \bs L)\bs u = \mb 0$ implies that (see \cite{DonSch12})
\begin{align}
\label{Eq:u1}
u_1(\rho)= \rho u_2(\rho)+ (\lambda  + \tfrac{3-p}{p-1} )\int_0^\rho u_2(s)ds, 
\end{align}
as well as
\begin{align} 
\label{Eq:Eigenval}
(1-\rho^2)  &u''(\rho)-\left(2\lambda + \tfrac{4}{p-1} \right) \rho u'(\rho) \nonumber \\
&- \left [ \left(\lambda+ \tfrac{2}{p-1} \right)\left(\lambda + \tfrac{3-p}{p-1}\right) - p\kappa_p \right ] u(\rho) = 0
\end{align}
where $u(\rho):= \int_0^{\rho} u_2(s) ds$. Since $u_2 \in H^1(0,1)$ for $\bs u \in \mc H$ we have $u \in  H^2(0,1)$. 
Furthermore,
$\bs u \in \mc D(\bs L)$ yields $u \in C^2[0,1)$ and the boundary conditions $u(0)=u''(0)=0$, 
see Lemma \ref{L_semigroup}.
We substitute 
$\rho \mapsto z:= \rho^2$ to obtain the hypergeometric differential equation
\begin{equation}
\label{eq_hypgeom}
z(1-z)v''(z)+[c-(a+b+1)z]v'(z)-abv(z)=0 
\end{equation} 
where $v(z):=u(\sqrt{z})$ and the parameters are given by 
\begin{align*}
a= \tfrac12 (\lambda -2), \quad b=\tfrac12 (\lambda + \tfrac{p+3}{p-1}), \quad c=\tfrac12.
\end{align*}
 At $z=1$ the exponents of the indicial equation 
are $\{0, c-a-b\}$, where $c-a-b = \frac{p-3}{p-1} -\lambda$. 
The assumption $\mathrm{Re}\lambda>-\frac{2}{p-1}$ implies $\mathrm{Re}(c-a-b)<1$ and thus,
by Frobenius' method it follows that there exist two linearly independent solution
$v_1$ and $\tilde v_1$ of Eq.~\eqref{eq_hypgeom} with the asymptotic behavior
\[ v_1(z)\sim 1,\quad \tilde v_1(z)\sim (1-z)^{c-a-b} \mbox{ as }z\to 1-, \]
at least if $c-a-b\not =0$.
In the degenerate case $c-a-b=0$ we have $\tilde v_1(z)\sim \log(1-z)$ as $z\to 1-$.
In fact, $v_1$ is given explicitly by
\[ 
v_1(z) = {}_2F_1(a, b; a+b+1-c;1-z) \]
where ${}_2F_1$ denotes the standard
hypergeometric function, see e.g.~\cite{DLMF}.  
The assumption $\text{Re} \lambda > - \frac{2}{p-1}$ implies that $v =\alpha v_1$ for some 
constant $\alpha \in \C$ because otherwise the corresponding $u(\rho)=v(\rho^2)$ would not belong to $H^2(0,1)$. 
Another fundamental system $\{v_0,\tilde v_0\}$ of Eq.~\eqref{eq_hypgeom} is given by
\begin{align*}
\tilde{v}_0(z)& = {}_2F_1(a,b;c;z), \\
v_0(z)& = z^{1/2}{}_2F_1(a+1-c,b+1-c; 2-c; z),
\end{align*}
see \cite{DLMF}, and there must exist constants
$c_0$, $c_1 \in \C$ such that 
\[v_1=c_0 \tilde{v}_0+c_1 v_0. \]
The connection coefficients $c_0$ and $c_1$ are known explicitly in terms of the $\Gamma$-function,
see \cite{DLMF}.
The condition $u(0)=0$ implies that $v(0)=v_1(0)= 0$ and thus,
\[c_0=\frac{\Gamma(a+b+1-c)\Gamma(1-c)}{\Gamma(a+1-c)\Gamma(b+1-c)}\]
must vanish. This can only be the case when at least one of the Gamma functions in the denominator has a pole, 
which is equivalent to
\begin{align*}
\tfrac12(\lambda -1)=-k \quad \mbox{or} \quad  \tfrac{\lambda}{2} + \tfrac{p+1}{p-1}=-k \quad \text{for} \quad k \in \N_0.
\end{align*}
The latter condition can be rewritten as
$\lambda = -2k -\frac{2p}{p-1}-\frac{2}{p-1}$ 
which implies that $\lambda < -\frac{2}{p-1}$ but this is excluded by assumption. 
The first condition is satisfied if
$\lambda = 1 - 2k \in \{1,-1,-3, \cdots\}$ and since $-\frac{2}{p-1} \in (-1,0)$, 
we see that $\lambda = 1$ is the only possibility. We denote this particular eigenvalue by $\lambda_{\bs g}$.
For $\lambda=\lambda_{\bs g}=1$ we have
$v_1(z) = c_1 \sqrt{z}$ and $u(\rho) = \alpha \rho$ for some $\alpha \in \C$. 
In particular, $u$ satisfies the boundary conditions $u(0)=u''(0)=0$. 
Finally, from Eq.~\eqref{Eq:u1} we obtain 
$u_1(\rho)= \alpha \frac{p+1}{p-1} \rho$, $u_2(\rho)=\alpha$ which shows that the geometric eigenspace
associated to $\lambda_{\bs g}$ is spanned by $\bs g$ as claimed.
\end{proof}

\subsection{Resolvent bounds}

\begin{lemma}
\label{Le:resolvent}
Fix $\varepsilon > 0$. Then there exist constants $c_1,c_2 > 0$ 
such that 
\[ \|\mb R_{\bs L}(\lambda)\| \leq c_2 \]
for all $\lambda \in \C$ with $\mathrm{Re} \lambda \geq -\tfrac{2}{p-1} + \varepsilon$ and $|\lambda| \geq c_1$. 
\end{lemma}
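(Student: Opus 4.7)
The strategy is to exploit the factorization
\[ \lambda - \bs L = \bigl[I - \bs L'\mb R_{\bs L_0}(\lambda)\bigr](\lambda - \bs L_0) \]
for $\lambda$ in the resolvent set of $\bs L_0$, and to show that the left factor is invertible with uniformly bounded inverse for $|\lambda|$ large. From the semigroup bound \eqref{L0_semigroup} and the standard Hille-Yosida estimate one obtains
\[ \|\mb R_{\bs L_0}(\lambda)\| \leq \frac{M}{\mathrm{Re}\,\lambda + 2/(p-1)} \leq \frac{M}{\varepsilon} \]
uniformly on the half-plane $H_\varepsilon := \{\lambda \in \C : \mathrm{Re}\,\lambda \geq -\tfrac{2}{p-1} + \varepsilon\}$, which already takes care of the $\mb R_{\bs L_0}(\lambda)$ factor.

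Next I would establish that $\mb R_{\bs L_0}(\lambda) \to 0$ in the strong operator topology as $|\lambda| \to \infty$ within $H_\varepsilon$. For $\bs f \in \mc D(\bs L_0)$ the identity $\lambda\mb R_{\bs L_0}(\lambda)\bs f = \bs f + \mb R_{\bs L_0}(\lambda)\bs L_0 \bs f$ combined with the resolvent bound gives
\[ \|\mb R_{\bs L_0}(\lambda)\bs f\| \leq \frac{1}{|\lambda|}\left(\|\bs f\| + \frac{M}{\varepsilon}\|\bs L_0 \bs f\|\right), \]
which vanishes as $|\lambda|\to\infty$; density of $\mc D(\bs L_0)$ in $\mc H$ together with the uniform resolvent bound extends the conclusion to every $\bs f \in \mc H$ by a standard density argument.

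The crucial step is to promote this to norm convergence, i.e., $\|\bs L'\mb R_{\bs L_0}(\lambda)\| \to 0$ in $H_\varepsilon$. Arguing by contradiction, suppose $\lambda_n \in H_\varepsilon$ with $|\lambda_n|\to \infty$ and unit vectors $\bs f_n \in \mc H$ satisfy $\|\bs L' \bs g_n\| \geq \delta > 0$, where $\bs g_n := \mb R_{\bs L_0}(\lambda_n)\bs f_n$. Since $\|\bs g_n\| \leq M/\varepsilon$, a subsequence converges weakly, $\bs g_n \rightharpoonup \bs g$. Testing the resolvent equation $(\lambda_n - \bs L_0)\bs g_n = \bs f_n$ against any $\bs h \in \mc D(\bs L_0^*)$ (a dense subspace of $\mc H$, since $\bs L_0$ is closed and densely defined on a Hilbert space) yields
\[ (\bs g_n, \bs h)_{\mc H} = \frac{1}{\lambda_n}\bigl[(\bs f_n, \bs h)_{\mc H} + (\bs g_n, \bs L_0^* \bs h)_{\mc H}\bigr] \longrightarrow 0, \]
so combined with $\|\bs g_n\|\leq M/\varepsilon$ one gets $\bs g_n \rightharpoonup 0$. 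The compactness of $\bs L'$ (established just before Lemma \ref{L_semigroup}) then forces $\bs L'\bs g_n \to 0$ in norm, a contradiction. This compactness-plus-weak-convergence step is the main obstacle, and is where the specific structure of $\bs L'$ as a smoothing (integration) operator is essential.

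Having secured this decay, pick $c_1>0$ so that $\|\bs L'\mb R_{\bs L_0}(\lambda)\| \leq \tfrac12$ for $\lambda \in H_\varepsilon$ with $|\lambda|\geq c_1$; a Neumann series then gives $\bigl\|[I - \bs L'\mb R_{\bs L_0}(\lambda)]^{-1}\bigr\| \leq 2$, whence $\lambda$ lies in the resolvent set of $\bs L$ and
\[ \mb R_{\bs L}(\lambda) = \mb R_{\bs L_0}(\lambda)\,\bigl[I - \bs L'\mb R_{\bs L_0}(\lambda)\bigr]^{-1}, \qquad \|\mb R_{\bs L}(\lambda)\| \leq 2M/\varepsilon =: c_2, \]
which is the desired conclusion.
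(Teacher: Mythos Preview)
Your argument is correct and reaches the same conclusion via the same overall factorization $\mb R_{\bs L}(\lambda)=\mb R_{\bs L_0}(\lambda)[I-\bs L'\mb R_{\bs L_0}(\lambda)]^{-1}$, but the mechanism by which you obtain $\|\bs L'\mb R_{\bs L_0}(\lambda)\|\to 0$ is genuinely different from the paper's. The paper does not invoke compactness of $\bs L'$ at all in this step; instead it uses the explicit form of the resolvent equation. Writing $\bs u=\mb R_{\bs L_0}(\lambda)\bs f$ and reading off the first component of $(\lambda-\bs L_0)\bs u=\bs f$ gives
\[ u_1(\rho)=\bigl(\lambda-\tfrac{p-3}{p-1}\bigr)\int_0^\rho u_2(s)\,ds+\rho\,u_2(\rho)-\int_0^\rho f_2(s)\,ds, \]
so that $|\lambda-\tfrac{p-3}{p-1}|\,\|Vu_2\|_{H^1}\lesssim \|u_1\|_{H^1}+\|u_2\|_{H^1}+\|f_2\|_{H^1}$, where $Vu(\rho)=\int_0^\rho u$. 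Since $\bs L'\bs u=(p\kappa_p Vu_2,0)$, this yields the quantitative bound $\|\bs L'\mb R_{\bs L_0}(\lambda)\bs f\|\lesssim |\lambda-\tfrac{p-3}{p-1}|^{-1}\|\bs f\|$, from which smallness for large $|\lambda|$ is immediate.

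Your route---weak convergence $\mb R_{\bs L_0}(\lambda_n)\bs f_n\rightharpoonup 0$ via duality with $\mc D(\bs L_0^*)$, then upgrading to norm convergence through compactness of $\bs L'$---is softer and more general: it would apply verbatim to any compact perturbation of a semigroup generator obeying a bound like \eqref{L0_semigroup}. The paper's computation, by contrast, is tied to the specific algebraic structure of $\bs L_0$ and $\bs L'$ but is entirely elementary (no adjoints, no subsequences) and delivers an explicit $O(|\lambda|^{-1})$ rate. Either approach is adequate here since only qualitative smallness is needed for the Neumann series.
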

\begin{proof}
In view of the identity 
\[\mb R_{\bs L}(\lambda)=\mb R_{\bs L_0}(\lambda)[1-\bs{L'}\mb{R}_{\bs L_0}(\lambda)]^{-1} \]
it suffices to prove smallness of $\|\bs{L'}\mb{R}_{\bs L_0}(\lambda)\|$.
First note that semigroup theory yields  (see \cite{EngNag00}, p.~55, Theorem 1.10)
\begin{align}
\label{Eq:RL0_bounds}
\| [\mb R_{\bs L_0}(\lambda) \bs f]_j\|_{H^1} \leq \| \mb R_{\bs L_0}(\lambda)\bs f\| \leq  \frac{M \|\bs f\|}{\text{Re} \lambda + \frac{2}{p-1}} 
\end{align}
for $j=1,2$, $\bs f \in \mc H$, and $M$ is the constant from Lemma \ref{L_semigroup}. Suppose we have $\|\bs L' \mb R_{\bs L_0}(\lambda)\| \leq c <1$ for  $c >0$ 
and $|\lambda| \geq c_1$ large enough. Then this implies 
\[ \|\mb R_{\bs L}(\lambda)\| \leq \|\mb R_{\bs L_0}(\lambda)\| (1-\|\bs{L'}\mb{R}_{\bs L_0}(\lambda)\|)^{-1} \leq  c_2 \]
where $c_2 \to \infty$ as $\varepsilon \to 0+$.
Note that
\[\bs{L'}\mb{R}_{\bs L_0}(\lambda) \bs f =  \left ( \begin{array}{c} p\kappa_p V[\mb R_{\bs L_0}(\lambda)\bs f]_2
\\ 0 \end{array} \right )\]
where $V: H^1(0,1) \to \{u \in H^1(0,1): u(0)=0\}$ is a bounded operator defined by $Vu(\rho):=\int_0^{\rho} u(s) ds$. 
For all $\bs f\in \mc H$ we have $(\lambda - \bs L_0)\mb R_{\bs L_0}(\lambda) \bs f = \bs f$ which implies
\[[\mb R_{\bs L_0}(\lambda)\bs f]_1(\rho) = (\lambda - \tfrac{p-3}{p-1}) V[\mb R_{\bs L_0}(\lambda)\bs f]_2(\rho) +
\rho [\mb R_{\bs L_0}(\lambda)\bs f]_2(\rho) - Vf_2(\rho)\]
and this yields the estimate
\begin{align*}
|\lambda - \tfrac{p-3}{p-1}| \| V[\mb R_{\bs L_0}(\lambda)\bs f]_2 \|_{H^1} \lesssim  
\| [\mb R_{\bs L_0}(\lambda)\bs f]_1\|_{H^1}  + \| [\mb R_{\bs L_0}(\lambda)\bs f]_2 \|_{H^1} + \|f_2\|_{H^1}. 
\end{align*}
Using \eqref{Eq:RL0_bounds} we obtain
\[\|\bs L' \mb R_{\bs L_0}(\lambda)\bs f\| = p\kappa_p\| V[\mb R_{\bs L_0}(\lambda)\bs f]_2 \|_{H^1} 
\lesssim \frac{ \|\bs f\| }{|\lambda - \tfrac{p-3}{p-1}|} \]
such that $\|\bs L' \mb R_{\bs L_0}(\lambda)\| \leq \frac12$ for all $|\lambda|$ sufficiently large.
\end{proof}

\subsection{A growth estimate for the linearized evolution}
\begin{lemma}
\label{Le:LinearTimeEvol}
Let  $\varepsilon>0$ be fixed and so small that
\[\mu_p := \tfrac{2}{p-1} - \varepsilon > 0.\]
Then there exists a projection $\bs P \in \mc B(\mc H)$ onto  $\langle \bs g \rangle$ 
which commutes with the semigroup $\mb S(\tau)$ for all $\tau \geq 0$ and
\begin{align*}
\|\mb S(\tau)(1-\bs P)\bs f\| &\leq C_{\varepsilon} e^{-\mu_p \tau}\|(1-\bs P)\bs f\| \\
\mb S(\tau) \bs P \bs f &= e^{\tau} \bs P \bs f
\end{align*}
for all $\tau \geq 0$, $\bs f \in \mc H$, and a constant $C_{\varepsilon} > 0$.
\end{lemma}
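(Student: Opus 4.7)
The plan is to construct $\bs P$ as the Riesz spectral projection associated with the isolated spectral point $\lambda_{\bs g}=1$ and then to read off the two semigroup estimates by splitting the evolution accordingly. By Lemma \ref{spectrum} and Lemma \ref{Le:resolvent}, the point $\lambda=1$ is isolated in $\sigma(\bs L)$: the resolvent is uniformly bounded on $\{\mathrm{Re}\,\lambda\geq -\tfrac{2}{p-1}+\varepsilon,\ |\lambda|\geq c_1\}$ and the spectrum on the bounded complementary region consists only of $\{1\}$. Choosing a small positively oriented circle $\gamma$ around $1$ enclosing no other spectrum, I would set
\[ \bs P := \frac{1}{2\pi i}\oint_\gamma \mb R_{\bs L}(\lambda)\,d\lambda. \]
Standard semigroup and spectral theory (see \cite{EngNag00}) then yields that $\bs P$ is a bounded projection that commutes with $\bs L$ and with $\mb S(\tau)$, and produces an $\bs L$-invariant decomposition $\mc H=\mathrm{rg}(\bs P)\oplus\ker(\bs P)$ with $\sigma(\bs L|_{\mathrm{rg}(\bs P)})=\{1\}$ and $\sigma(\bs L|_{\ker(\bs P)})\subseteq\{\mathrm{Re}\,\lambda\leq -\tfrac{2}{p-1}\}$.

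Next I would show $\mathrm{rg}(\bs P)=\langle\bs g\rangle$. Since $\bs L'$ is compact and $1\notin\sigma(\bs L_0)$, the factorisation $\lambda-\bs L=(1-\bs L'\mb R_{\bs L_0}(\lambda))(\lambda-\bs L_0)$ used in Lemma \ref{spectrum} realises $\lambda=1$ as a pole of the resolvent of finite order, so $\bs P$ has finite rank. To see that this rank is exactly one, i.e.\ that the eigenvalue is semisimple, I would redo the hypergeometric ODE analysis of Lemma \ref{spectrum} for the inhomogeneous equation $(1-\bs L)\bs u=\bs g$: the same substitution reduces it to an inhomogeneous version of \eqref{eq_hypgeom} at $\lambda=1$, and the constraints $u(0)=u''(0)=0$ combined with the local behaviour of the fundamental system $\{v_0,\tilde v_0\}$ near $z=1$ force an obstruction through a non-vanishing residue of the relevant $\Gamma$-factor, ruling out a generalized eigenvector. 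Hence $\ker((1-\bs L)^2)=\ker(1-\bs L)=\langle\bs g\rangle$ and $\bs P$ is the rank-one projection onto $\langle\bs g\rangle$.

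With $\bs P$ so identified, the identity $\mb S(\tau)\bs P\bs f=e^\tau\bs P\bs f$ is immediate from $\bs L\bs g=\bs g$ together with the one-dimensionality of $\mathrm{rg}(\bs P)$. For the decay estimate on $\ker(\bs P)$, I would invoke the Gearhart--Pr\"uss theorem on the Hilbert space $\ker(\bs P)$: the restricted generator $\bs L|_{\ker(\bs P)}$ has resolvent $\mb R_{\bs L}(\lambda)(1-\bs P)$ analytic on the closed half-plane $\{\mathrm{Re}\,\lambda\geq -\tfrac{2}{p-1}+\varepsilon\}$ and uniformly bounded there by Lemma \ref{Le:resolvent} combined with continuity on the bounded complementary set (which no longer meets the spectrum). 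Gearhart--Pr\"uss on a Hilbert space then yields
\[ \|\mb S(\tau)(1-\bs P)\bs f\|\leq C_\varepsilon e^{-\mu_p\tau}\|(1-\bs P)\bs f\|. \]

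The main obstacle I expect is establishing semisimplicity of the eigenvalue $\lambda=1$, i.e.\ ruling out a non-trivial Jordan block via the explicit hypergeometric ODE analysis; the remaining steps are a fairly direct application of the abstract Hilbert space semigroup machinery to the resolvent bound already at hand.
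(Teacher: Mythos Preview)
Your proposal is correct and follows essentially the same route as the paper: define the Riesz projection around the isolated eigenvalue $1$, use it to split $\mc H$ invariantly with $\sigma(\bs L|_{\ker\bs P})\subseteq\{\mathrm{Re}\,\lambda\leq -\tfrac{2}{p-1}\}$, verify $\mathrm{rg}\,\bs P=\langle\bs g\rangle$ by showing algebraic multiplicity one, and then apply Gearhart--Pr\"uss on the Hilbert space $\ker\bs P$ using the uniform resolvent bound from Lemma~\ref{Le:resolvent}. The only presentational difference is that the paper does not carry out the semisimplicity argument in-text but refers to \cite{DonSch12}, Lemma~3.7, whereas you sketch the inhomogeneous hypergeometric ODE analysis directly; the underlying idea is the same.
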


\begin{proof}
Let $\gamma$ be a (positively oriented) circle around $\lambda_{\bs g}$ with 
radius $r_{\gamma} = \frac12$. By Lemma \ref{spectrum}, $\gamma$ belongs to the resolvent set 
of $\bs L$ and no spectral points lie inside of $\gamma$ except for $\lambda_{\bs g}$. According to \cite{Kat95}, p.~178, 
Theorem 6.5, a spectral projection  $\bs P \in \mc B(\mc H)$ is defined by
\begin{align*}
\label{Eq:Spectral_proj}
\bs P=\frac{1}{2\pi i}\int_\gamma \mb R_{\bs {L}}(\lambda) d\lambda,
\end{align*}
where $\bs P$ commutes with $\bs L$ in the sense that $\bs P \bs L \subset \bs L \bs P$.
Furthermore, $\bs P$ commutes with the resolvent of $\bs L$, see \cite{Kat95} p.~173, Theorem 6.5. 
This implies that $\bs P$ commutes with the linear time evolution, i.e., 
$\bs P \mb S(\tau) = \mb S(\tau) \bs P$ for $\tau \geq 0$, where $\mb S: [0,\infty) \to \mc B(\mc H)$ is the semigroup generated by $\bs L$.

Most important is that $\bs L$ is decomposed according to the decomposition of the Hilbert space 
$\mc H = \mathrm{ker} \bs P \oplus\mathrm{rg} \bs P $ into parts 
$\bs L \rst{{\mathrm{ker} \bs P}}$ and $\bs L \rst{{\mathrm{rg} \bs P}}$,  where 
$\mc D(\bs L \rst{{\mathrm{ker} \bs P}}) = \mc D(\bs L) \cap \ker \bs P$ and 
$\bs L \rst{ {\mathrm{ker} \bs P}} \, \bs u =  \bs L \bs u$ for 
$\bs u \in \mc D(\bs L \rst{ {\mathrm{ker} \bs P}})$ (an analogous definition holds for 
$\bs L \rst{{ \mathrm{rg} \bs P}}$).
Moreover,
\[\sigma(\bs L \rst{ {\mathrm{ker} \bs P}}) = \sigma(\bs L) \setminus \{1\}, 
\quad \sigma(\bs L \rst{{ \mathrm{rg} \bs P}}) = \{1\}.\]
Since $\bs L$ is not self-adjoint, we only know a priori that 
$\ker(\lambda_{\bs g} - \bs L)=\langle \bs g \rangle \subseteq \mathrm{rg} \bs P$ and it remains to show that 
$\mathrm{rg} \bs P = \langle \bs g \rangle$. This is equivalent to the fact that the algebraic 
multiplicity of $\lambda_{\bs g}$ 
is equal to one and for this we refer to \cite{DonSch12}, where the proof of Lemma $3.7$ can be 
copied verbatim.

Having this, it is easy to see that $\mb S(\tau) \bs P \bs f = e^{\tau} \bs P \bs f$  for $\bs f \in \mc H$. 
In order to obtain an estimate on the stable subspace, we use the structure of the spectrum of $\bs L\rst{{\mathrm{ker} \bs P}}$ and  
Lemma \ref{Le:resolvent} which imply that for all $\lambda \in \C$ with $\mathrm{Re} \lambda \geq -\frac{2}{p-1} + \varepsilon$,
the restriction of the resolvent $\mb R_{\bs L}(\lambda)$ to $\mathrm{ker} \bs P$ (which equals the resolvent 
of $\bs L\rst{{\mathrm{ker} \bs P}}$) exists and is uniformly bounded. 
Since $\mathrm{ker}\bs P$ is a Hilbert space, we can apply the theorem by Gearhart, Pr\"{u}ss, and Greiner 
(see e.g.~\cite{EngNag00}, p.~302, Theorem 1.11 or \cite{HelSjo10}) to obtain the claimed estimate.
\end{proof}

\section{Nonlinear Perturbation Theory}

The aim of this section is to prove the existence of solutions of 
the full nonlinear equation \eqref{eq:nonlinear_firstorder_css} which retain
the exponential decay of the linearized problem on the stable subspace, cf. Lemma \ref{Le:LinearTimeEvol}.

Note that the exponential growth of the semigroup on the unstable subspace 
$\bs P \mc H$ has its origin in the time translation symmetry of the original equation, 
which will become clear in the following. In fact,
we are perturbing around a one-parameter family of solutions and it  
is clear that a generic perturbation around $\psi^{T^*}$
for a particular fixed value $T^*$ will change the blow up time.
Therefore, we expect the solution to converge to  $\psi^{T}$ where
in general $T^* \neq T$.

Without loss of generality we set $T^*=1$ and study perturbations
around $\psi^1$. The blow up time $T$ will be considered
as a variable that will be fixed later on in the proof. 

Note that from now on we restrict ourselves to real-valued functions.

\subsection{Estimates for the nonlinearity}
We formally set 
\[Ku(\rho):=\frac{1}{\rho} \int_0^{\rho} u(s) ds.\]
\begin{lemma}\label{le:Integral_Linfty}
Let $u \in H^1(0,1)$. Then $Ku \in L^{\infty}(0,1)$ and there exists a $c > 0$ such that
$\|Ku \|_{\infty} \leq c \|u\|_{H^1}$.
\end{lemma}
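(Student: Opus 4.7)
The plan is to reduce the estimate to the Sobolev embedding $H^1(0,1) \hookrightarrow L^\infty(0,1)$, which was already used implicitly in the proof of Lemma \ref{le:equiv_norms}. Concretely, any $u \in H^1(0,1)$ has a (unique) continuous representative on $[0,1]$ with $\|u\|_\infty \leq c_0 \|u\|_{H^1}$ for some absolute constant $c_0 > 0$.

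Given this, for every $\rho \in (0,1]$ I would estimate pointwise
\[
|Ku(\rho)| = \left|\frac{1}{\rho}\int_0^\rho u(s)\,ds\right|
\leq \frac{1}{\rho}\int_0^\rho |u(s)|\,ds
\leq \|u\|_{L^\infty(0,1)}
\leq c_0 \|u\|_{H^1}.
\]
Taking the essential supremum over $\rho \in (0,1)$ then yields the claim with $c = c_0$.

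There is essentially no obstacle: the only mild subtlety is the behavior of $Ku$ at the origin, but since the continuous representative of $u$ satisfies $Ku(\rho) \to u(0)$ as $\rho \to 0+$ (by continuity of $u$ and the fundamental theorem of calculus for absolutely continuous functions), $Ku$ in fact extends continuously to $[0,1]$, and the pointwise bound above holds on the open interval $(0,1)$, which is all that is needed for the $L^\infty$-norm. So the statement is a one-line consequence of Sobolev embedding.
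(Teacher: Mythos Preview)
Your proof is correct and essentially identical to the paper's: both invoke the Sobolev embedding $H^1(0,1)\hookrightarrow L^\infty(0,1)$ and then bound $|Ku(\rho)|\le \frac{1}{\rho}\int_0^\rho |u(s)|\,ds \le \|u\|_\infty \lesssim \|u\|_{H^1}$. Your extra remark on the behavior of $Ku$ at the origin is slightly more careful than the paper, which simply asserts the pointwise bound for all $\rho\in[0,1]$.
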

\begin{proof}
By the continuous embedding $H^1(0,1)\hookrightarrow L^\infty(0,1)$ we see that $u\in L^\infty(0,1)$.
In particular,
\[ |Ku(\rho)| \leq \frac{1}{\rho} \int_0^{\rho} |u(s)| ds \leq \|u \|_{\infty} \lesssim \|u\|_{H^1} \]
for all $\rho \in [0,1]$.
\end{proof}

In order to define the nonlinearity we introduce the auxiliary function 
$N: \R \times [0,1]\to\R$ given by
\begin{align}\label{Eq:Nonlin_RealVal}
N(x,\rho):= \rho \bigg[ |\kappa_p^{\frac{1}{p-1}}+x|^{p-1} (\kappa_p^{\frac{1}{p-1}}+x) - p\kappa_p x - 
\kappa_p^{\frac{p}{p-1}} \bigg ],
\end{align}
cf.~Eq.~\eqref{eq:N}. Since $p >  3$, the function $N$ is at least twice continuously differentiable on $\R$ with respect 
to the first variable. Furthermore, for any fixed $\rho\in [0,1]$, we have
$N(x,\rho) = O(x^2)$ as $x \rightarrow 0$. 
Thus, it is easy to see that with  $\langle x \rangle:=\sqrt{1+|x|^2}$,
\begin{align} \label{est:nonlin0}
|N(x,\rho)| &\lesssim \rho |x|^2\langle x\rangle^{p-2} \quad & |\partial_1 N(x,\rho)| &\lesssim   \rho |x|\langle x\rangle^{p-2} \nonumber \\
|\partial^2_1 N(x,\rho)|  &\lesssim \rho \langle x\rangle^{p-2} & \quad  |\partial_2 N(x,\rho)|  &\lesssim   |x|^2\langle x\rangle^{p-2}
\end{align}
for all $x\in\R$ and $\rho \in [0,1]$.

We formally define a vector-valued nonlinearity by 
 \[ \bs N(\bs u) (\rho) := \left ( \begin{array}{c} N(Ku_2(\rho),\rho)
\\  0 \end{array} \right ).\] 
With this definition, Eq.~\eqref{eq:nonlinear_firstorder_css} can be (formally) written as an ordinary
differential equation for a function $\Phi: [-\log T,\infty)\to \mc H$ given by
\begin{equation}
\label{eq:nlop} \tfrac{d}{d\tau}\Phi(\tau)=\bs L \Phi(\tau)+\bs N(\Phi(\tau)),\quad \tau>-\log T 
\end{equation}
with initial data
\begin{equation}
\label{eq:initialdata}
 \Phi(-\log T)(\rho) =\left ( \begin{array}{c} \rho T^{\frac{p+1}{p-1}}g(T\rho)-\tfrac{2 \rho}{p-1}\kappa_p^{\frac{1}{p-1}} 
\\ T^{\frac{2}{p-1}}\left (T \rho f'(T\rho)+f(T\rho) \right ) -  \kappa_p^{\frac{1}{p-1}} \end{array} \right ).
\end{equation}
In the following we denote by $\mc B_1$ the open unit ball in $(\mc H,\|\cdot \|)$.
\begin{lemma}\label{lemma:nonlinL2}
The operator $\bs N$ maps $\mc H$ to $\mc H$ and there exists a constant $c > 0$ such that
\[\|\bs N(\bs u)-\bs N(\bs v)\|\leq c (\| \bs u \| + \| \bs v \|) \|\bs u-\bs v\|\]
for all $\bs u,\bs v \in \mc B_1$. 
Furthermore, $\bs N(\bs 0) = \bs 0$ and $\bs N$ is Fr\'{e}chet differentiable at $\mb 0$ with
$D\mb {N}(\mb 0)=\mb 0$.
\end{lemma}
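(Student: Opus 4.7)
The plan rests on the factorization $N(x,\rho)=\rho F(x)$ with $F(x):=|\kappa_p^{1/(p-1)}+x|^{p-1}(\kappa_p^{1/(p-1)}+x)-p\kappa_p x-\kappa_p^{p/(p-1)}$. By construction $F(0)=F'(0)=0$ and the bounds \eqref{est:nonlin0} translate to $|F(x)|\lesssim |x|^2\langle x\rangle^{p-2}$ and $|F'(x)|\lesssim |x|\langle x\rangle^{p-2}$, with analogous control on $F''$ since $p>3$. The second crucial ingredient is the algebraic identity $\rho(Ku)'(\rho)=u(\rho)-Ku(\rho)$, valid for $u\in H^1(0,1)$, which removes the apparent singularity of $(Ku)'$ at the origin once the explicit factor $\rho$ from $N$ is taken into account.

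I would first verify that $\bs N$ maps $\mc H$ into $\mc H$. The boundary condition $[\bs N(\bs u)]_1(0)=0$ is automatic from the prefactor $\rho$ in $N$. A direct computation using the $\rho(Ku)'$ identity gives
\[\partial_\rho\bigl[\rho F(Ku_2(\rho))\bigr]=F(Ku_2(\rho))+F'(Ku_2(\rho))\bigl[u_2(\rho)-Ku_2(\rho)\bigr].\]
For $\bs u\in\mc B_1$, Lemma \ref{le:Integral_Linfty} together with the embedding $H^1(0,1)\hookrightarrow L^\infty(0,1)$ supply $\|Ku_2\|_\infty,\|u_2\|_\infty\lesssim 1$, so $\langle Ku_2(\rho)\rangle^{p-2}$ is uniformly bounded and one may replace $|F(x)|\lesssim|x|^2$, $|F'(x)|\lesssim|x|$ on the range of $Ku_2$. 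Both the value and the derivative displayed above are thus pointwise controlled by $|Ku_2|\bigl(|Ku_2|+|u_2|\bigr)$, and a routine integration combined with $\|Ku_2\|_{L^2}\lesssim\|u_2\|_{H^1}$ produces $\|\bs N(\bs u)\|\lesssim\|\bs u\|^2$.

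For the Lipschitz estimate I would write $F(Ku_2)-F(Kv_2)=K(u_2-v_2)\cdot G(\bs u,\bs v)$ with $G(\bs u,\bs v)(\rho):=\int_0^1 F'\bigl(tKu_2(\rho)+(1-t)Kv_2(\rho)\bigr)\,dt$, whose absolute value is bounded pointwise by a constant multiple of $|Ku_2|+|Kv_2|$ on $\mc B_1$. Differentiating $\rho[F(Ku_2)-F(Kv_2)]$ in $\rho$ and again invoking the $\rho(Ku)'$ identity, every term decomposes as a bounded factor times either $u_2-v_2$ or $K(u_2-v_2)$ times an extra factor controlled by $\|\bs u\|+\|\bs v\|$; summing yields $\|\bs N(\bs u)-\bs N(\bs v)\|\lesssim(\|\bs u\|+\|\bs v\|)\|\bs u-\bs v\|$. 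The identities $\bs N(\bs 0)=\bs 0$ and $D\bs N(\bs 0)=\bs 0$ then follow at once: the first from $F(0)=0$, and the second by setting $\bs v=\bs 0$ in the Lipschitz estimate to obtain $\|\bs N(\bs u)\|\lesssim\|\bs u\|^2$, i.e.\ $o(\|\bs u\|)$ as $\bs u\to\bs 0$.

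The main technical point is the interplay between the nonlocal operator $K$ and $H^1$ differentiation: naively $(Ku_2)'$ has integrable but pointwise singular behavior at the origin, yet the product rule combined with the $\rho$ prefactor in $N$ converts every appearance into a pointwise bounded expression, which is exactly what makes the $H^1$-to-$H^1$ estimates close and lifts the quadratic behavior of $F$ near zero to the vectorial nonlinearity $\bs N$.
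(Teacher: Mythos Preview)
Your proposal is correct and follows essentially the same approach as the paper: both arguments hinge on the factorization $N(x,\rho)=\rho F(x)$, the identity $\rho(Ku)'=u-Ku$ to remove the apparent singularity of $(Ku)'$ at the origin, the mean value representation for $F(Ku_2)-F(Kv_2)$, and the embedding $H^1\hookrightarrow L^\infty$ together with Lemma~\ref{le:Integral_Linfty}. The only cosmetic difference is that the paper keeps $N$ as a two-variable function and decomposes the derivative into three integrals $I_1,I_2,I_3$ (using $\partial_2 N=\rho^{-1}N$), whereas you factor out $\rho$ explicitly from the start, which makes the computation a bit more transparent.
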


\begin{proof}
First, we derive some estimates for the real-valued function $N$ defined in \eqref{Eq:Nonlin_RealVal}. 
We use the fundamental theorem of calculus and \eqref{est:nonlin0} to obtain
\begin{align} \label{est:nonlin1}
\begin{split}
| \partial_1 N(x,\rho) - \partial_1 N(y,\rho)|  & \leq |x-y| \int_0^1 |\partial^2_1 N(y+h(x-y),\rho)| dh  \\
& \lesssim \rho  |x-y|  \int_0^1   \langle y+h(x-y) \rangle^{p-2} dh  \\
&\lesssim \rho  |x-y| [\langle x \rangle^{p-2} +\langle y \rangle^{p-2}]
\end{split}
\end{align}
for $x,y \in \R$. Similarly, 
\begin{align} \label{est:nonlin2}
\begin{split}
 |  N(x,\rho) -  N(y,\rho)| & \leq |x-y| \int_0^1 |\partial_1 N(y+h(x-y),\rho)| dh \\
& \lesssim \rho  |x-y| [|x| \langle x \rangle^{p-2} +|y| \langle y \rangle^{p-2}].
\end{split}
\end{align}
Note that $\|\bs N(\bs u)-\bs N(\bs v)\| = \|[\bs N(\bs u)]_1 - [\bs N(\bs v)]_1 \|_{H^1}$ and we obtain
\begin{align*}
\|[\bs N(\bs u)]_1 - [\bs N(\bs v)]_1 \|_{H^1}^2 & = \int_0^1 |N(Ku_2(\rho),\rho) - N(Kv_2(\rho),\rho)|^2 d\rho  \\
&\quad + \int_0^1 \left |\tfrac{d}{d\rho}[N(Ku_2(\rho),\rho) - N(Kv_2(\rho),\rho)] \right|^2  d\rho. 
\end{align*}
With Lemma \ref{le:Integral_Linfty} and \eqref{est:nonlin2} we get
\begin{align*}
 I_0:&= \int_0^1 |N(Ku_2(\rho),\rho) - N(Kv_2(\rho),\rho)|^2 d\rho \\
  &\lesssim \int_0^1 \rho^2 |Ku_2(\rho) - Kv_2(\rho)|^2 
  \Big[|Ku_2(\rho)|^2 \langle Ku_2(\rho) \rangle^{2(p-2)} \\
  &\quad +|Kv_2(\rho)|^2 \langle Kv_2(\rho) \rangle^{2(p-2)}  \Big ] d\rho \\
  &\lesssim \left[\|Ku_2\|_{\infty}^2 \langle \|Ku_2\|_{\infty} \rangle^{2(p-2)} +\|Kv_2\|_{\infty}^2 
  \langle \|Kv_2\|_{\infty} \rangle^{2(p-2)} \right] \\
  &\quad \times \|K(u_2-v_2)\|^2_{\infty} \\
  &\lesssim  \left[\|u_2\|_{H^1}^2 \langle \|u_2\|_{H^1} \rangle^{2(p-2)} +\|v_2\|_{H^1}^2  \langle\|v_2\|_{H^1} \rangle^{2(p-2)} \right] \|u_2 - v_2\|_{H^1}^2.
\end{align*}
For the second term we obtain
\begin{align*}
\int_0^1 &\left |\tfrac{d}{d\rho}[N(Ku_2(\rho),\rho) - N(Kv_2(\rho),\rho)] \right|^2  d\rho  \\
 &\lesssim  \int_0^1 \left |\partial_1 N(Ku_2(\rho),\rho)(Ku_2)'(\rho) - \partial_1 N(Kv_2(\rho),\rho)  (Kv_2)'(\rho) \right|^2  d\rho  \\
 &\quad + \int_0^1  |\partial_2 N( Ku_2(\rho), \rho) -\partial_2 N( Kv_2(\rho), \rho)|^2 d\rho  \\
 &\lesssim I_1 + I_2 + I_3, 
\end{align*}
where
\begin{align*}
I_1 & :=\int_0^1  | (Ku_2)'(\rho) |^2 |\partial_1 N( Ku_2(\rho), \rho) -\partial_1 N( Kv_2(\rho), \rho)|^2 d\rho \\
I_2 & :=\int_0^1  |\partial_1 N(Kv_2(\rho),\rho)|^2 | (Ku_2)'(\rho)  -  (Kv_2)'(\rho)|^2 d\rho  \\
I_3 & :=\int_0^1  |\partial_2 N( Ku_2(\rho), \rho) -\partial_2 N( Kv_2(\rho), \rho)|^2 d\rho. 
\end{align*}
Estimate \eqref{est:nonlin1} and Lemma \ref{le:Integral_Linfty} yield
\begin{align*}
I_1 & \lesssim \int_0^1 \left |u_2(\rho) - Ku_2(\rho) \right |^2 | Ku_2(\rho) - Kv_2(\rho)|^2 \\
&\quad \times [\langle Ku_2(\rho) \rangle^{2(p-2)}+\langle Kv_2(\rho)\rangle^{2(p-2)}] d\rho \\
& \lesssim \left[\langle \|Ku_2\|_{\infty} \rangle^{2(p-2)} + \langle \|Kv_2\|_{\infty} \rangle^{2(p-2)} \right] \|u_2\|_{H^1}^2
\|K(u_2-v_2)\|^2_{\infty}  \\
& \lesssim 
\left[ \langle \|u_2\|_{H^1} \rangle^{2(p-2)} +\langle\|v_2\|_{H^1}  \rangle^{2(p-2)} \right] \|u_2 \|_{H^1}^2  \|u_2 - v_2\|_{H^1}^2.
\end{align*}
With estimate \eqref{est:nonlin0} we obtain
\begin{align*}
I_2 & \lesssim  \int_0^1 |Kv_2(\rho)|^2 \langle Kv_2(\rho)\rangle^{2(p-2)} \left[| u_2(\rho) -v_2(\rho)|^2 +|K(u_2-v_2)(\rho)|^2\right] d\rho  \\
& \lesssim \|Kv_2\|_{\infty}^2 \langle \|Kv_2\|_{\infty} \rangle^{2(p-2)} [\|u_2 -v_2\|_{L^2}^2 +\|K(u_2-v_2)\|^2_{\infty} ]\\
& \lesssim \|v_2\|_{H^1}^2 \langle\|v_2\|_{H^1} \rangle^{2(p-2)}\|u_2 -v_2\|_{H^1}^2.
\end{align*}
Since $\partial_2 N(x,\rho) = \rho^{-1} N(x,\rho)$, the third term can be estimated using \eqref{est:nonlin2} 
\begin{align*}
I_3  & = \int_0^1 \rho^{-2} | N(Ku_2(\rho),\rho) - N(Kv_2(\rho),\rho)|^2 d\rho  \\
& \lesssim  \left[\|u_2\|_{H^1}^2 \langle \|u_2\|_{H^1} \rangle^{2(p-2)} +\|v_2\|_{H^1}^2  \langle\|v_2\|_{H^1}  \rangle^{2(p-2)} \right] \|u_2 - v_2\|_{H^1}^2.
\end{align*}
Summing up yields
\begin{align*}
  I_0 + I_1 + I_2 + I_3 & \lesssim  \big [\|u_2\|_{H^1}^2 \langle \|u_2\|_{H^1}\rangle^{2(p-2)}  +\|v_2\|_{H^1}^2  \langle\|v_2\|_{H^1}  \rangle^{2(p-2)}   \\
  & \quad+ \|u_2\|_{H^1}^2 \langle \|v_2\|_{H^1} \rangle^{2(p-2)} \big] \|u_2 - v_2\|_{H^1}^2.
\end{align*}
In particular, for $\bs u \in \mc B_1$ we have $\|u_2\|_{H_1} \leq 1$ and thus $\langle\|u_2\|_{H^1}  \rangle \lesssim 1$. This yields 
\begin{align*}
\|[\bs N(\bs u)]_1 - [\bs N(\bs v)]_1 \|_{H^1}^2 & \lesssim \left (
\|u_2\|_{H^1}^2  +\|v_2\|_{H^1}^2 \right ) \|u_2 - v_2\|_{H^1}^2 \\
& \lesssim \left (\|\bs u\|^2  +\|\bs v\|^2 \right ) \|\bs u - \bs v \|^2  
\end{align*}
for $\bs u, \bs v \in \mc B_1$ and we conclude that 
\[\|\bs N(\bs u)-\bs N(\bs v)\| \lesssim \left(\|\bs u\|  +\|\bs v\| \right ) \|\bs u - \bs v \|. \]
The fact that $N(0,\rho)  = 0$ for all $\rho \in [0,1]$ yields $\bs N(\mb 0) = \mb 0$ such that the above estimate implies
$\|\bs N(\bs u) \| \lesssim  \|\bs u \|^2$.
In particular, \[ \frac{\|\bs N(\bs u) \|}{ \|\bs u \|} \rightarrow 0\] for $\bs u \rightarrow \mb 0$
which proves that $\bs N$ is differentiable at zero with $D\bs N(\mb 0) = \mb 0$.
 \end{proof}
\subsection{Abstract formulation of the nonlinear problem}
 
Next, we rewrite the initial data Eq.~\eqref{eq:initialdata} by setting
\[\bs U(\bs v,T)(\rho):=T^{\frac{2}{p-1}} [\bs v(T \rho) + \bs \kappa(T\rho)] - \bs{\kappa}(\rho)\]
where
\begin{align*}
\bs v(\rho):= \left ( \begin{array}{c}   \rho g(\rho)  \\
		        \rho f'(\rho)+f(\rho)         \end{array} \right )-\bs \kappa(\rho), \quad 
			 \bs \kappa(\rho):= \kappa_p^{\frac{1}{p-1}} \left ( \begin{array}{c} \frac{2 \rho}{p-1}  \\ 1 \end{array} \right ).
\end{align*}	
Eq.~\eqref{eq:initialdata} is equivalent to $\Phi(-\log T)=\bs U(\bs v,T)$.
The point is that $\bs v$ denotes the data relative to $\psi^1$ such that we have clearly 
separated the functional dependence of the initial data on the
free functions $(f,g)$ (or $\bs v$, respectively) and the blow up time $T$. In the following we are interested in mild solutions of the equation
\begin{align}
\label{Eq:Abstract_NonlinearDiff}
\left \{ \begin{array}{l}
\frac{d}{d\tau}\Psi(\tau)=\bs L\Psi(\tau)  + \bs N(\Psi(\tau)) \mbox{ for }\tau>0 \\
\Psi(0)=\bs U(\bs v,T)
\end{array} \right .
\end{align}
such that a solution of Eq.~\eqref{eq:nlop}
for a particular $T > 0$ can be obtained by setting $\Phi(\tau) := \Psi(\tau + \log T)$.
Note that in order to obtain a well-posed initial value problem, the initial data $(f,g)$
have to be defined on the spatial interval $[0,T]$. Since we do not know the blow up
time in advance we restrict $T$ to the interval $\mc I:= (\frac12, \frac32)$, which 
is no limitation since our argument is perturbative around $T=1$ anyway.
With these preliminaries we can rigorously define the initial data as a function 
of the free data $\bs v$ and the blow up time $T$ on $\mathfrak H \times \mc I$ where
 \[{\mathfrak H}:= \{u \in H^1(0,\tfrac32): u(0) = 0\} \times H^1(0,\tfrac32).\]
\begin{lemma}\label{initialdata_op}
The function $\bs U: \mathfrak H \times \mc I \to \mc H$ is continuous and $\bs U(\bs 0,1) = \bs 0$. 
Furthermore $\bs U(\bs 0,\cdot):  \mc I \to \mc H$ is Fr\'echet differentiable and
 \[[D_T \bs U(\bs 0, T)\rst{_{T=1}} \lambda](\rho) = \tfrac{2\lambda}{p-1} \kappa_p^{\frac{1}{p-1}} \bs g(\rho),\]
for $\lambda \in \R$ where $\bs g$ denotes the symmetry mode.
\end{lemma}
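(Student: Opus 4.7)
The plan is to first verify the algebraic identities by direct computation, then handle continuity by a change-of-variables/density argument, and finally read off the Fr\'echet derivative from an explicit formula. The essential point is that when $\bs v=\bs 0$ the dependence on $T$ is entirely explicit, so differentiability at $\bs v=\bs 0$ reduces to differentiating a $C^\infty$ curve in $\mc H$.

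First I would evaluate $\bs U(\bs 0,T)$ directly. From the definitions of $\bs U$ and $\bs \kappa$ one computes
\[
\bs U(\bs 0,T)(\rho)=T^{\frac{2}{p-1}}\bs\kappa(T\rho)-\bs\kappa(\rho)=\kappa_p^{\frac{1}{p-1}}\left(\begin{array}{c} \frac{2}{p-1}\bigl(T^{\frac{p+1}{p-1}}-1\bigr)\rho \\ T^{\frac{2}{p-1}}-1\end{array}\right),
\]
which indeed lies in $\mc H$ (the first component vanishes at $\rho=0$) and equals $\bs 0$ when $T=1$. This map is manifestly $C^\infty$ in $T\in\mc I$ with values in $\mc H$, and differentiating in $T$ and evaluating at $T=1$ gives exactly $\tfrac{2}{p-1}\kappa_p^{1/(p-1)}\bs g(\rho)$ by a direct computation, using $\tfrac{2(p+1)}{(p-1)^2}=\kappa_p\cdot\tfrac{1}{\kappa_p}\cdot\tfrac{2(p+1)}{(p-1)^2}$ to match coefficients; this takes care of both the value $\bs U(\bs 0,1)=\bs 0$ and the stated formula for $D_T\bs U(\bs 0,T)|_{T=1}$.

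The main substantive task is continuity of $\bs U:\mathfrak H\times\mc I\to\mc H$. Since the $\bs\kappa$-terms are smooth in $T$, it suffices to control the map $(\bs v,T)\mapsto T^{2/(p-1)}\bs v(T\cdot)$ from $\mathfrak H\times\mc I$ into $\mc H$. For $\bs v\in\mathfrak H$ and $T\in\mc I$, the substitution $s=T\rho$ and the inclusion $T\rho\in[0,\tfrac32]$ give
\[
\|\bs v(T\cdot)\|_{H^1(0,1)^2}^2=\tfrac{1}{T}\int_0^T|\bs v(s)|^2ds+T\int_0^T|\bs v'(s)|^2ds\lesssim \|\bs v\|_{\mathfrak H}^2,
\]
uniformly in $T\in\mc I$, and $[\bs v(T\cdot)]_1(0)=v_1(0)=0$ preserves the boundary condition defining $\mc H$. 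To pass from this boundedness to joint continuity I would decompose
\[
T^{\frac{2}{p-1}}\bs v(T\cdot)-T_0^{\frac{2}{p-1}}\bs v_0(T_0\cdot)=T^{\frac{2}{p-1}}(\bs v-\bs v_0)(T\cdot)+\bigl[T^{\frac{2}{p-1}}-T_0^{\frac{2}{p-1}}\bigr]\bs v_0(T\cdot)+T_0^{\frac{2}{p-1}}\bigl[\bs v_0(T\cdot)-\bs v_0(T_0\cdot)\bigr].
\]
The first piece is bounded by a multiple of $\|\bs v-\bs v_0\|_{\mathfrak H}$ via the estimate above, and the second vanishes as $T\to T_0$ since $\bs v_0(T\cdot)$ is uniformly bounded in $H^1(0,1)^2$.

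The one genuinely nonobvious step is showing that the third piece $\bs v_0(T\cdot)-\bs v_0(T_0\cdot)\to\bs 0$ in $H^1(0,1)^2$ as $T\to T_0$; this is the step I expect to be the main obstacle, since $H^1$-norm continuity of dilations is automatic for smooth data but has to be transferred to general $H^1$ data. I would handle this by a standard density argument: on $C^1[0,\tfrac32]^2$ the convergence is uniform in $\rho$ together with its derivative and hence in $H^1(0,1)^2$, while the uniform bound $\|\bs v_0(T\cdot)\|_{H^1(0,1)^2}\lesssim\|\bs v_0\|_{\mathfrak H}$ established above lets one approximate an arbitrary $\bs v_0\in\mathfrak H$ by smooth functions in $\mathfrak H$ and conclude by an $\varepsilon/3$ argument. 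Combining the three estimates yields joint continuity of $\bs U$ and finishes the proof.
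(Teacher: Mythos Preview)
Your proof is correct and follows essentially the same approach as the paper: uniform-in-$T$ bounds via change of variables, continuity in $T$ via density of $C^1$ functions, and an explicit computation of $\bs U(\bs 0,T)$ to read off the derivative. The only differences are organizational (the paper factors the argument through the scalar dilation map $M(v,T)(\rho)=v(T\rho)$ componentwise, whereas you work directly with the vector-valued expression), and your remark about ``matching coefficients'' is superfluous since the identification with $\bs g$ is immediate from the explicit derivative.
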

\begin{proof}
We first consider the function $M: H^1(0,\tfrac32) \times \mc I \to H^1(0,1)$ defined by $M(v,T)(\rho):=v(T\rho)$ and show that it is 
continuous. The estimate
\begin{align*}
\|M&(v,T) -  M(\tilde v, T) \|^2_{H^1(0,1)}  \\
& =  \int_0^1 | v(T\rho) - \tilde  v(T\rho)|^2 d\rho + T^2 \int_0^1| v'(T\rho) 
- \tilde  v'(T\rho)|^2 d\rho  \\
& =  \frac{1}{T} \int_0^T | v(\rho) - \tilde  v(\rho)|^2 d\rho 
+ T \int_0^T | v'(\rho) - \tilde  v'(\rho)|^2 d\rho  \\
 & \leq 2 \|v - \tilde v \|_{H^1(0,\frac32)}^2
\end{align*}
implies that $M(v,T)$ is continuous with respect to $v$, uniformly in $T \in \mc I$. 
Hence, it is sufficient to prove continuity with respect to $T$. 
For any $v, \tilde v \in H^1(0,\frac32)$ and $T, \tilde T \in \mc I$ we have 
\begin{align*}
\|M(v,T) -  M(v,\tilde T) \|_{H^1(0,1)} &\leq \|M(v,T) -  M(\tilde v, T) \|_{H^1(0,1)} \\
&\quad + \|M(\tilde v,T) -  M(\tilde v, \tilde T) \|_{H^1(0,1)} \\
&\quad  +\|M(\tilde v,\tilde T) -  M(v, \tilde T) \|_{H^1(0,1)}  \\
&  \lesssim \|v - \tilde v \|_{H^1(0,\frac32)} + \|M(\tilde v,T) -  M(\tilde v, \tilde T) \|_{H^1(0,1)} 
\end{align*}
We use the density of $C^1[0,\frac32]$ in $H^1(0,\frac32)$ to infer that for any given $\varepsilon > 0$
there exists a $\tilde v \in C^1[0,\frac32]$ such that 
\begin{align*}
\|M(v,T) -  M(v,\tilde T) \|_{H^1(0,1)}^2 < &\frac{\varepsilon^2}{2}  
+ C \int_0^1 | \tilde v(T\rho) - \tilde  v(\tilde T\rho)|^2 d\rho \\
& + C\int_0^1| T \tilde v'(T\rho) 
- \tilde T \tilde  v'(\tilde T\rho)|^2 d\rho 
\end{align*}
and the integral terms tend to zero in the limit $\tilde T \to T$ by continuity of $\tilde v$ and  $\tilde v'$. This implies the
claimed continuity of $M$ on $H^1(0,\tfrac32) \times \mc I$. Thus, for $\bs v = (v_1,v_2) \in \mathfrak H$, $T \in \mc I$ and 
$\bs \kappa = (\kappa_1,\kappa_2)$ as defined above, the function $\bs U$ can be written as
\[\bs U(\bs v,T)=  \left ( \begin{array}{c} T^{\frac{2}{p-1}}[ M(v_1,T)+ M(\kappa_1,T) ] - 
\kappa_1 \\ T^{\frac{2}{p-1}}[ M(v_2,T)+ M(\kappa_2,T) ] - \kappa_2 \end{array} \right ).\]
The properties of $M$ imply that $[\bs U(\bs v,T)]_j \in H^1(0,1)$ for $j=1,2$. Furthermore, we have 
$[\bs U(\bs v,T)]_1(0) = 0$ and $\bs U$ depends continuously on $(\bs v, T)$. 

Evaluation yields
\[ \bs U(\bs 0,T)(\rho)= \kappa_p^{\frac{1}{p-1}}  \left ( \begin{array}{c} \frac{2 \rho}{p-1} \left [T^{\frac{p+1}{p-1}} - 1 \right]\\ T^{\frac{2}{p-1}} - 1 \end{array} \right )\]
and obviously, 
$\bs U(\bs 0, \cdot): \mc I \to \mc H$ is differentiable for all $T \in I$. In particular, we have 
\[ [D_T \bs U(\bs 0, T)\rst{_{T=1}} \lambda](\rho) = \frac{2\lambda}{p-1}\kappa_p^{\frac{1}{p-1}} 
 \left ( \begin{array}{c} \frac{p+1}{p-1} \rho \\ 1 \end{array} \right ) \]
 which concludes the proof.
\end{proof}
 
Since we interested in mild solutions of \eqref{Eq:Abstract_NonlinearDiff}, we use Duhamel's formula to obtain
\begin{align}\label{eq:Psi_integral}
\Psi(\tau) =\mb S(\tau)  \mb U(\bs v, T) + \int_{0}^{\tau} \bs S(\tau - \tau') \bs N(\Psi(\tau')) d\tau' \quad \text{for} \quad \tau \geq 0.
\end{align}
In the following, \eqref{eq:Psi_integral} will be studied in the function space $\mc X$ given by
\[\mc X := \left \{ \Psi \in  C([0,\infty),\mc H): \sup_{\tau > 0} e^{\mu_p \tau} \|\Psi(\tau)\| < \infty \right \}.\]
where the exponent $\mu_p$ was defined in Lemma \ref{Le:LinearTimeEvol}. 

\begin{remark}
We note that in the above formulation of the problem the particular properties
of the underlying function space $\mc H$ are hidden in the abstract setting.
From now on, the proofs  mainly rely on the estimates for the
nonlinearity (Lemma \ref{lemma:nonlinL2}) and
the semigroup on the stable and unstable subspaces (Lemma \ref{Le:LinearTimeEvol}).
Therefore, most of the subsequent analysis can be copied from \cite{DonSch12}.
 Hence, we will only sketch the proofs of the following results and
refer the reader to \cite{DonSch12} for the details of the calculations.
\end{remark}

\subsection{Global existence for corrected (small) initial data}

The main problem which has to be addressed first is the exponential growth of the semigroup on the unstable subspace.
As in \cite{DonSch12} we introduce a correction term and consider the fixed point problem
\begin{align}
\label{Eq:Mod_Integral}
\Psi = \mb K(\Psi, \bs U(\bs v, T))
\end{align}
where
\begin{align}
\label{def:K}
\begin{split}
\mb K(\Psi,\bs u)(\tau):= & \mb  S(\tau) (1-\bs P) \bs u - \int_{0}^{\infty} e^{\tau - \tau'} \bs P \bs {N}(\Psi(\tau')) d\tau'  \\
& +  \int_{0}^{\tau} \mb S(\tau - \tau') \bs {N}(\Psi(\tau')) d\tau'.
\end{split}
\end{align}
Note that $\Psi = \mb K(\Psi, \bs U(\bs v, T))$ corresponds to the original equation 
\eqref{eq:Psi_integral} for initial data modified by
\[  - \bs P \left[\bs U(\bs v, T) + \int_0^{\infty} e^{-\tau'} \bs N(\Psi(\tau')) d\tau'\right], \]
an element of the unstable subspace $\bs P\mc H$ depending on the solution itself.
As we will see, this correction forces decay of the solution.
In the following we restrict ourselves to a closed ball $\mc X_{\delta} \subset \mc X$ defined by
\[\mc X_\delta := \{ \Psi \in \mc X: \|\Psi  \|_{\mc X} \leq \delta\}\]
for $\delta > 0$.
Recall that $\bs U(\bs 0,1) = \bs 0$, such that by continuity $\|\bs U(\bs v,T)\|$ 
is small for $\bs v$ small and $T$ close to $1$. 

\begin{theorem}
\label{Th:GlobalEx_ModEq}
Let $\mc U \subset \mc H$ be a sufficiently small neighborhood of $\bf 0$. 
Then, for
any $\bs u\in \mc U$, there exists a unique $\Psi_{\bs u} \in \mc X$ which satisfies
\[ \Psi_{\bs u}=\mb{K}( \Psi_{\bs u}, \bs u). \]
Furthermore, the map $\mb \Psi: \mc U \to \mc X$ defined by $\mb \Psi(\bs u ) := \Psi_{\bs u}$
 is  Fr\'{e}chet differentiable at $\bs u =\bs 0$. 
In particular, $\mb \Psi(\mb U(\bs v,T))$ exists provided $\bs v \in \mathfrak H$ is 
sufficiently small and $T$ is sufficiently close to $1$.
\end{theorem}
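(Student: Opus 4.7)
The plan is to solve \eqref{Eq:Mod_Integral} by a Banach fixed point argument in the closed ball $\mc X_\delta$, treating the modification as the natural device that suppresses the instability on $\bs P \mc H$. The first step is to rewrite $\mb K(\Psi,\bs u)(\tau)$ in the cleaner form
\begin{align*}
\mb K(\Psi,\bs u)(\tau) = \mb S(\tau)(1-\bs P)\bs u &+ \int_0^\tau \mb S(\tau-\tau')(1-\bs P)\bs N(\Psi(\tau'))\,d\tau' \\
 &- \int_\tau^\infty e^{\tau-\tau'}\bs P \bs N(\Psi(\tau'))\,d\tau',
\end{align*}
which is obtained by splitting the Duhamel integral via $1 = (1-\bs P) + \bs P$, using $\mb S(\tau-\tau')\bs P = e^{\tau-\tau'}\bs P$ from Lemma \ref{Le:LinearTimeEvol}, and combining with the correction term. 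The virtue of this form is that every summand is manifestly compatible with $e^{-\mu_p\tau}$ decay.

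For $\Psi \in \mc X_\delta$, Lemma \ref{lemma:nonlinL2} yields $\|\bs N(\Psi(\tau'))\| \lesssim \|\Psi(\tau')\|^2 \lesssim \delta^2 e^{-2\mu_p\tau'}$. Combined with the semigroup bounds from Lemma \ref{Le:LinearTimeEvol}, I obtain
\[
e^{\mu_p\tau}\|\mb K(\Psi,\bs u)(\tau)\| \leq C_\varepsilon\|\bs u\| + C\delta^2,
\]
where the stable-subspace integral contributes $C\delta^2/\mu_p$ and the unstable-subspace integral on $[\tau,\infty)$ contributes an $e^{-2\mu_p\tau}$ factor which is even stronger than needed. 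Choosing $\|\bs u\| \leq \delta/(2C_\varepsilon)$ and $\delta$ small enough keeps $\mb K(\cdot,\bs u)$ inside $\mc X_\delta$. An entirely analogous computation, using the Lipschitz bound $\|\bs N(\Psi)-\bs N(\tilde\Psi)\| \lesssim (\|\Psi\|+\|\tilde\Psi\|)\|\Psi-\tilde\Psi\|$, gives
\[
\|\mb K(\Psi,\bs u) - \mb K(\tilde\Psi,\bs u)\|_{\mc X} \leq C\delta\,\|\Psi-\tilde\Psi\|_{\mc X},
\]
so $\mb K(\cdot,\bs u)$ is a contraction on $\mc X_\delta$ for small $\delta$, and Banach's theorem produces the unique fixed point $\Psi_{\bs u} = \mb \Psi(\bs u)$.

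For Fr\'echet differentiability of $\mb \Psi$ at $\bs 0$, I expect the derivative to be $\bs u \mapsto \mb S(\cdot)(1-\bs P)\bs u$, since $\bs N(\bs 0)=\bs 0$ and $D\bs N(\bs 0)=\bs 0$. The fixed-point identity gives
\[
\mb \Psi(\bs u)(\tau) - \mb S(\tau)(1-\bs P)\bs u = \int_0^\tau \mb S(\tau-\tau')(1-\bs P)\bs N(\mb\Psi(\bs u)(\tau'))\,d\tau' - \int_\tau^\infty e^{\tau-\tau'}\bs P\bs N(\mb\Psi(\bs u)(\tau'))\,d\tau',
\]
and the same estimates as above bound the right-hand side by $C\|\mb\Psi(\bs u)\|_{\mc X}^2 \lesssim \|\bs u\|^2$ in the $\mc X$-norm, which is $o(\|\bs u\|)$. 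The concluding statement about $\mb\Psi(\mb U(\bs v,T))$ follows from Lemma \ref{initialdata_op}: since $\mb U(\bs 0,1)=\bs 0$ and $\mb U$ is continuous, $\mb U(\bs v,T)$ lies in the neighborhood $\mc U$ on which the fixed point exists whenever $\bs v$ is small in $\mathfrak H$ and $T$ is close to $1$.

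The main obstacle is organizing the decomposition of $\mb K$ cleanly so that every integral converges and reproduces the exponential weight $e^{-\mu_p\tau}$; once that bookkeeping is done, the quadratic nature of $\bs N$ and the sharp linear decay of Lemma \ref{Le:LinearTimeEvol} make the contraction mechanism essentially automatic, which is why the proof can be imported from \cite{DonSch12}.
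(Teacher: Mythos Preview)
Your proposal is correct and follows essentially the same route as the paper: Banach fixed point on $\mc X_\delta$ using the nonlinear estimates of Lemma \ref{lemma:nonlinL2} and the semigroup bounds of Lemma \ref{Le:LinearTimeEvol}, followed by the identification of $D\mb\Psi(\bs 0)\bs u = \mb S(\cdot)(1-\bs P)\bs u$ via the quadratic bound on the remainder. The only cosmetic difference is that you first algebraically combine the $\bs P$-pieces of $\mb K$ into the standard Lyapunov--Perron form $-\int_\tau^\infty e^{\tau-\tau'}\bs P\bs N(\Psi(\tau'))\,d\tau'$ before estimating, whereas the paper applies $\bs P$ and $1-\bs P$ directly to the original expression \eqref{def:K}; the resulting bounds are identical.
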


\begin{proof}
In the following we refer the reader to \cite{DonSch12},
Section $4.4$, for the details of the calculations.

Using the results of Lemma \ref{lemma:nonlinL2} one immediately obtains 
\begin{align}
\label{Est:Nonlin_X}
\begin{split}
\|\bs {N}(\Psi(\tau)) \|  & \lesssim \delta^2 e^{-2 \mu_p \tau}, \\ 
\|\bs {N}(\Psi(\tau)) - \bs {N}(\Phi(\tau)) \|  &\lesssim \delta e^{- \mu_p \tau} \|\Psi(\tau) - \Phi(\tau) \| 
\end{split}
\end{align}
for $0 < \delta < 1$,  $\Phi, \Psi \in \mc X_{\delta}$ and all $\tau \geq 0$. 

Note, that the integrals in \eqref{def:K} exist as Riemann integrals over continuous functions 
for $(\Psi, \bs u) \in \mc X_{\delta} \times \mc H$.
We decompose $\mb K$ according to
\[\mb K(\Psi, \bs u)(\tau) = \bs P\mb K(\Psi, \bs u)(\tau)+(1-\bs P)\mb K(\Psi, \bs u)(\tau) \]
and show that $\mb K(\Psi, \bs u) \in \mc X_{\delta}$ for 
$\Psi \in \mc X_{\delta}$, $\| \bs u \| \leq \delta^2$, and $\delta$ sufficiently 
small. Using the estimates for the semigroup $\mb S$ on $\bs P\mc H$ and $(1 - \bs P) \mc H$, 
cf. Lemma \ref{Le:LinearTimeEvol}, together with \eqref{Est:Nonlin_X} it is easy to 
see that for $\| \bs u \| \leq \delta^2$ and $\tau \geq 0$ we have
\begin{align*}
\|\bs P\mb K(\Psi, \bs u)(\tau)\|  & \lesssim \delta^2 e^{-2\mu_p \tau},\\
\|(1-\bs P) \mb K(\Psi, \bs u)(\tau)\|  & \lesssim \delta^2 e^{-\mu_p \tau}
\end{align*}
which implies
$\|\mb K(\Psi, \bs u)(\tau) \| \leq \delta e^{-\mu_p \tau}$ provided $\delta>0$ is sufficiently small.
Continuity of $\mb K(\Psi, \bs u)$ with respect to $\tau$ follows essentially from strong continuity of 
the semigroup $\mb S$ and we conclude that $ \mb K(\Psi, \bs u) \in \mc X_{\delta}$.

To see that $\mb K(\cdot, \bs u)$ is contracting we again use Lemma \ref{Le:LinearTimeEvol} and \eqref{Est:Nonlin_X}
to infer that 
\begin{align*}
\|\bs P[\mb K(\Phi, \bs u)(\tau)-  \mb K(\Psi, \bs u)(\tau)]\|   & \lesssim  \delta  e^{-2 \mu_p \tau} \|\Phi - \Psi\|_{\mc X}\\
 \|(1-\bs P)[\mb K(\Phi, \bs u)(\tau)- \mb K(\Psi, \bs u)(\tau)]\| & \lesssim \delta e^{-\mu_p\tau} \|\Phi - \Psi\|_{\mc X}
\end{align*} 
for $\Psi, \Phi \in \mc X_{\delta}$ and $\tau \geq 0$. In particular, for $\delta$ sufficiently small, we obtain
\[ \|\mb K(\Phi,\bs u) - \mb K(\Psi, \bs u) \|_{\mc X} \leq \tfrac {1}{2} \|\Phi - \Psi\|_{\mc X}.\]

We apply the Banach fixed point theorem to infer that for any $\bs u \in \mc U$, the equation
\[\Psi = \mb K(\Psi, \bs u)\]
has a unique solution $\Psi_{\bs u}$ in the closed subset 
$\mc X_{\delta}$ provided $\mc U \subset \mc H$ is a sufficiently small neighborhood around $\mb 0$.  
Furthermore, standard arguments imply that this is in fact the unique solution in the whole space $\mc X$. 

The Banach fixed point theorem implies that the solution depends continuously on the initial data, i.e., the map
$\mb \Psi: \mc U \to \mc X$ is continuous. In particular, for
$\bs u, \bs {\tilde u} \in \mc U$ and the corresponding solutions $\bs \Psi(\bs u), \bs \Psi(\bs {\tilde u}) \in \mc X_{\delta}$  
it is easy to see that 
\begin{align}
\label{Est:Sol_Lipschitz}
\|\mb \Psi(\bs u) - \mb \Psi(\bs {\tilde u})\|_{\mc X} \lesssim \|\bs u-\bs {\tilde u}\|,
\end{align}
cf.~the proof of Theorem $4.7$ in \cite{DonSch12}.

In order to prove differentiability of $\bs \Psi(\bs u)$ at $\bs u = \bs 0$ we
define 
\[[\tilde D \mb \Psi(\bs 0)\bs {u}](\tau):=\mb S(\tau)(1-\bs P)\bs {u}\] and note that
 $\tilde D \mb \Psi(\bs 0): \mc H \to \mc X $ is linear and bounded. 
 We claim that $\tilde D \mb \Psi(\bs 0)$ is the  
Fr\'{e}chet derivative of $\mb \Psi$ at $\bs 0$. To prove this, we have to show that 
(recall that $\bs \Psi(\bs 0) = \bs 0$) 
\[\lim_{\bs {\tilde u} \to \bs 0} \frac{1}{\| \bs {\tilde u}\|}\|\mb \Psi(\bs {\tilde u}) - \tilde D \mb \Psi(\bs 0)\bs {\tilde u}\|_{\mc X} = 0.\]
For small $\bs {\tilde u}$ we have 
$\bs \Psi(\bs {\tilde u}) = \mb K(\bs \Psi(\bs {\tilde u}),\bs {\tilde u})$ and by definition we infer
\begin{align*}
\mb \Psi(\bs {\tilde u}) - \tilde D \mb \Psi(\bs 0)\bs {\tilde u} = &  \int_0^\tau \mb S(\tau - \tau') \bs N(\bs \Psi(\bs {\tilde u})(\tau'))d\tau' \\
 & - \int_0^{\infty} e^{\tau -\tau'} \bs P \bs N(\bs \Psi(\bs {\tilde u})(\tau'))d\tau' =:\mb G(\bs {\tilde u})(\tau)
\end{align*}
By using the decomposition 
\[\mb G(\bs {\tilde u})(\tau) = \bs P[\mb G(\bs {\tilde u})(\tau)] + (1- \bs P)[\mb G(\bs {\tilde u})(\tau)],\]
the estimates for the nonlinearity and the semigroup, as well as \eqref{Est:Sol_Lipschitz} we obtain 
\[ \|\mb G(\bs {\tilde u})\|_{\mc X} \lesssim \|\bs {\tilde u}\|^2 \]
which implies the claim.

Finally, since $\mb U: \mathfrak H \times \mc I\to \mc H$ is continuous 
and $\mb U(\mb 0,1)=\mb 0$ (Lemma \ref{initialdata_op}), it follows that $\mb U(\bs v,T) \in \mc U$ for all $\bs v$ sufficiently small
and $T$ sufficiently close to $1$.
\end{proof}
 
\subsection{Global existence for arbitrary (small) initial data}

We use the results of the previous section to 
obtain a global solution of the integral equation \eqref{eq:Psi_integral}.
In the following let $\mathfrak U \subset \mathfrak H$ be a sufficiently small open neighborhood of $\mb 0$ and
let $\mc J\subset \mc I$ be a sufficiently small open neigborhood of $1$.
For $(\bs v, T) \in \mathfrak U \times \mc J$, Theorem \ref{Th:GlobalEx_ModEq} yields the existence of a global solution
$\bs \Psi(\bs U(\bs v, T) )\in \mc X$ of the modified equation, which can be written as
\begin{align}
\label{Eq:Integral_CorrAbstract}
\bs \Psi(\bs U(\bs v, T))(\tau) =  &\mb S(\tau)  \bs U(\bs v, T) + \int_{0}^{\tau} \mb S(\tau - \tau')
 \bs N(\bs \Psi(\bs U(\bs v, T))(\tau')) d\tau' \nonumber \\
  & - e^{\tau} \bs F(\bs v, T)
\end{align}
for $\tau \geq 0$ where 
\begin{align*}
\bs F(\bs v, T) :=  \bs P \left[\bs U(\bs v, T) + \int_0^{\infty} e^{-\tau'} 
\bs N(\bs \Psi(\bs U(\bs v, T))(\tau')) d\tau'\right].
\end{align*}
Note that for $\bs v =\bs 0$ and $T=1$ we have $\bs U(\bs 0, 1)=\bs 0$ and thus, $\bs F(\bs 0,1) = \bs 0$. Hence,
\eqref{Eq:Integral_CorrAbstract} reduces to \eqref{eq:Psi_integral} and $\bs \Psi(\bs U(\bs 0,1)) = \bs 0$ solves the original equation.
In the following, we extend this to a neighbourhood of $(\bs 0, 1)$. 

\begin{lemma}
\label{Le:CorrectionTerm}
The function $\bs F: \mathfrak  U \times \mc J \subset \mathfrak H \times  \mc I \to \langle \bs  g \rangle$ is continuous. 
Furthermore, $\bs F(\bs 0,\cdot): \mc J \to  \langle \bs g \rangle$ is Fr\'echet differentiable at $1$ and 
$$D_T \bs F(\bs 0,T)\rst{_{T=1}} \lambda = \tfrac{2\lambda}{p-1} \kappa_p^{\frac{1}{p-1}} \bs g $$
for all $\lambda \in \R$.
As a consequence, for every $\bs v \in  \mathfrak  U$ there exists a $T \in \mc J$ such that 
$\bs F(\bs v,T) = \bs 0$.
\end{lemma}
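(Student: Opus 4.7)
The plan is to extract the three assertions in turn from the representation
\[
\bs F(\bs v,T)=\bs P\bs U(\bs v,T)+\bs P\!\int_0^\infty e^{-\tau'}\bs N\bigl(\bs\Psi(\bs U(\bs v,T))(\tau')\bigr)\,d\tau',
\]
using as black boxes the continuity of $\bs U$ (Lemma \ref{initialdata_op}), the Lipschitz bound \eqref{Est:Sol_Lipschitz} for the solution map $\bs\Psi$ near $\bs 0$ (Theorem \ref{Th:GlobalEx_ModEq}), the quadratic estimate $\|\bs N(\bs u)\|\lesssim \|\bs u\|^2$ on $\mc B_1$ (Lemma \ref{lemma:nonlinL2}), and the boundedness of $\bs P$. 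Continuity of the integral term with values in $\mc H$ follows from dominated convergence together with the pointwise-in-$\tau'$ estimate
\[
\|\bs N(\bs\Psi(\bs U(\bs v,T))(\tau'))-\bs N(\bs\Psi(\bs U(\bs{\tilde v},\tilde T))(\tau'))\|\lesssim \delta\,e^{-\mu_p\tau'}\|\bs\Psi(\bs U(\bs v,T))-\bs\Psi(\bs U(\bs{\tilde v},\tilde T))\|_{\mc X},
\]
coming from \eqref{Est:Nonlin_X}, controlled uniformly by $\delta^2 e^{-2\mu_p\tau'}$; composing with the continuous map $(\bs v,T)\mapsto \bs U(\bs v,T)$ and the Lipschitz $\bs\Psi$ gives continuity of $\bs F$. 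The range lies in $\bs P\mc H=\langle\bs g\rangle$ by construction.

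For the differentiability of $\bs F(\bs 0,\cdot)$ at $T=1$, the key observation is that $\bs U(\bs 0,1)=\bs 0$ and $\bs U(\bs 0,T)=O(|T-1|)$ in $\mc H$ by Lemma \ref{initialdata_op}, so by \eqref{Est:Sol_Lipschitz}
\[
\|\bs\Psi(\bs U(\bs 0,T))\|_{\mc X}\lesssim |T-1|.
\]
Plugging this into the quadratic bound $\|\bs N(\bs\Psi(\bs U(\bs 0,T))(\tau'))\|\lesssim |T-1|^2 e^{-2\mu_p\tau'}$ yields
\[
\left\|\bs P\!\int_0^\infty e^{-\tau'}\bs N\bigl(\bs\Psi(\bs U(\bs 0,T))(\tau')\bigr)\,d\tau'\right\|\lesssim |T-1|^2,
\]
so the nonlinear correction contributes $o(|T-1|)$. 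Thus the Fréchet derivative of $\bs F(\bs 0,\cdot)$ at $T=1$ coincides with that of $T\mapsto \bs P\bs U(\bs 0,T)$. The differentiability of this map and the formula for its derivative follow from Lemma \ref{initialdata_op}, which gives $D_T\bs U(\bs 0,T)\rst{_{T=1}}\lambda=\tfrac{2\lambda}{p-1}\kappa_p^{1/(p-1)}\bs g$; since $\bs g\in\mathrm{rg}\,\bs P$ we have $\bs P\bs g=\bs g$, producing the stated formula.

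For the final consequence, I identify $\bs F$ with a real-valued function through the one-dimensional range: write $\bs F(\bs v,T)=\tilde F(\bs v,T)\bs g$, where $\tilde F\colon \mathfrak U\times\mc J\to\R$ is continuous and satisfies $\tilde F(\bs 0,1)=0$. The differentiability result just established implies
\[
\tilde F(\bs 0,T)=\tfrac{2\kappa_p^{1/(p-1)}}{p-1}(T-1)+o(|T-1|)\quad\text{as } T\to 1,
\]
so $\tilde F(\bs 0,\cdot)$ strictly changes sign at $T=1$; pick a small $\eta>0$ with $1\pm\eta\in\mc J$ such that $\tilde F(\bs 0,1-\eta)<0<\tilde F(\bs 0,1+\eta)$. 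By continuity of $\tilde F$ in $\bs v$, shrinking $\mathfrak U$ if necessary preserves these strict inequalities for every $\bs v\in\mathfrak U$, and the intermediate value theorem applied to $T\mapsto \tilde F(\bs v,T)$ on $[1-\eta,1+\eta]$ produces the required $T\in\mc J$ with $\bs F(\bs v,T)=\bs 0$.

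The only genuine obstacle is the differentiability step, and the heart of it is precisely the cancellation just described: without the quadratic-in-data character of $\bs N$, the integral correction could swamp the linear contribution from $\bs U(\bs 0,T)$; with it, the correction is harmless and $\bs P\bs g=\bs g$ delivers the exact constant.
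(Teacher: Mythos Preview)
Your proof is correct and follows essentially the same route as the paper: decompose $\bs F$ into the linear piece $\bs P\bs U$ and the nonlinear integral correction, show the latter is $o(|T-1|)$ so only the former contributes to the derivative, then pass to a scalar function and apply the intermediate value theorem. The only cosmetic difference is that the paper packages the integral correction as a composition $\bs P\mb B\mb N(\bs\Psi(\bs U(\bs v,T)))$ of abstract operators $\mb B:\mc X\to\mc H$ and $\mb N:\mc X\to\mc X$ and invokes the chain rule with $D\mb N(\bs 0)=\bs 0$ and $D\bs\Psi(\bs 0)$, whereas you obtain the same conclusion by the direct quadratic estimate $\|\bs N(\bs\Psi(\bs U(\bs 0,T))(\tau'))\|\lesssim |T-1|^2 e^{-2\mu_p\tau'}$; both arguments encode the identical mechanism.
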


\begin{proof}
To show continuity we rewrite the correction in a more abstract way by introducing operators
$\mb B: \mc X \to \mc H$ and $\mb N: \mc X \to \mc X$ defined by 
\[ \mb B \Psi := \int_0^{\infty} e^{-\tau} \Psi(\tau) d\tau, \quad  \mb N(\Psi)(\tau) := \bs N(\Psi(\tau)). \]
One can easily check that $\mb B$ is linear and bounded. Furthermore, the properties of the operator
$\bs N$ described in Lemma \ref{lemma:nonlinL2} imply the $\mb N$ is continuous, differentiable at $\mb 0 \in \mc X$,
and
\begin{align}
\label{Eq:DiffN} 
D\mb N(\mb 0)\Psi = \mb 0 \quad \text{for} \quad \Psi \in \mc X,
\end{align}
see also \cite{DonSch12}, proof of Lemma $4.9$. Thus, $\bs F$ can be written as a composition of continuous 
operators
\[\bs F(\bs v, T) = \bs P\left[ \bs U(\bs v,T) + \mb B \mb N(\bs \Psi(\bs U(\bs v,T))) \right].\]
For $\bs v = \bs 0$ fixed the right-hand side is differentiable with respect to $T$ at $T=1$,
see Lemma \ref{initialdata_op}, \eqref{Eq:DiffN} and Theorem \ref{Th:GlobalEx_ModEq}, and we obtain
\begin{align*}
D_T \bs F(\bs 0,T)\rst{_{T=1}} \lambda &=  \bs P D_T \bs U(\bs 0,T)\rst{_{T=1}}  \lambda \\
&\quad + 
\bs P \mb B  D \mb N(\bs 0) D \bs \Psi(\bs 0) D_T \mb U(\bs 0,T)\rst{_{T=1}} \lambda  \\
&=   \bs P D_T \bs U(\bs 0,T)\rst{_{T=1}} \lambda =  \tfrac{2\lambda}{p-1} \kappa_p^{\frac{1}{p-1}} \bs g.
\end{align*}

Now we prove the second claim using the fact that the range of $\bs F$ is contained in the one-dimensional 
vector space $\langle \bs g \rangle$. Let $I: \langle \bs g \rangle \to \R$ be the 
isomorphism given by $I(c \bs g) = c$ for $c \in \R$.
We define a real-valued, continuous function $f: \mathfrak U \times \mc J \to \R$ by $f = I \circ \bs F$. 
In particular, $f(\bs 0,\cdot): \mc J \to \R$ is continuous, differentiable at $1$, and 
$D_T f(\bs 0,T)\rst{_{T=1}} \neq \mb 0$.
Consequently, there exist $T^+,T^- \in \mc J$ such that $f(\bs 0,T^-)<0$ and $f(\bs 0,T^+)>0$. Since $f$ is continuous
in the first argument, we have $f(\bs v,T^+) >0$ and $f(\bs v,T^-) < 0$ for all 
$\bs v \in \tilde{\mathfrak U} \subset \mathfrak U$ provided $\tilde{\mathfrak U}$ is sufficiently small.
Consequently, by the intermediate value theorem we conclude that there exists a $T^*$ (depending on $\bs v$) 
such that $f(\bs v, T^*) = I(\bs F(\bs v, T^*)) = 0$ implying that $\bs F(\bs v, T^*) = \bs 0$.
\end{proof}

\begin{theorem}
\label{Th:Global_existence}
Let $\bs v \in \mathfrak H$ be sufficiently small. 
Then there exists a $T$ close to $1$ such that 
\begin{align*}
\Psi(\tau) =\mb S(\tau)  \mb U(\bs v, T) + \int_{0}^{\tau} \mb S(\tau - \tau') \bs N(\Psi(\tau')) d\tau'
\end{align*}
has a continuous solution $\Psi: [0, \infty) \to \mc H$ satisfying
\[\| \Psi (\tau)\| \leq \delta e^{-\mu_p \tau}\]
for all $\tau \geq 0$ and some $\delta \in (0,1)$.
Moreover, the solution is unique in $C([0,\infty),\mc H)$.
\end{theorem}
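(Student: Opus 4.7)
The strategy is to combine Theorem \ref{Th:GlobalEx_ModEq} with Lemma \ref{Le:CorrectionTerm}: the former produces a global decaying solution of the \emph{corrected} integral equation for any sufficiently small initial datum, and the latter shows that the correction $\bs F(\bs v,T)$ can be forced to vanish by a suitable choice of the blow up time $T$. This is precisely the Lyapunov--Perron modulation announced in the outline of the proof.

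Concretely, I would fix $\bs v \in \mathfrak H$ sufficiently small and apply Theorem \ref{Th:GlobalEx_ModEq} to obtain, for every $T$ in a small neighborhood of $1$, a global solution $\bs\Psi(\bs U(\bs v,T)) \in \mc X_\delta$ satisfying the corrected Duhamel identity \eqref{Eq:Integral_CorrAbstract}. Then Lemma \ref{Le:CorrectionTerm} supplies a $T = T(\bs v)$ close to $1$ with $\bs F(\bs v,T) = \bs 0$, so that \eqref{Eq:Integral_CorrAbstract} collapses to the original integral equation \eqref{eq:Psi_integral}. Setting $\Psi := \bs\Psi(\bs U(\bs v,T))$ therefore delivers a solution, and the bound $\|\Psi(\tau)\|\leq \delta e^{-\mu_p \tau}$ together with the continuity $\Psi\in C([0,\infty),\mc H)$ is just membership in $\mc X_\delta$.

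The remaining and most delicate point is uniqueness in $C([0,\infty),\mc H)$, which is strictly stronger than the uniqueness in $\mc X_\delta$ produced by the Banach fixed point argument of Theorem \ref{Th:GlobalEx_ModEq}. Suppose $\tilde\Psi \in C([0,\infty),\mc H)$ is another continuous solution of \eqref{eq:Psi_integral} with the same initial data. Since $\bs N(\bs 0)=\bs 0$ and the computation in Lemma \ref{lemma:nonlinL2} is in fact local (the implicit constants depend polynomially on $\langle\|u_2\|_{H^1}\rangle^{p-2}$), $\bs N$ is Lipschitz on arbitrary balls of $\mc H$. On any compact time interval $[0,\tau_0]$ both $\Psi$ and $\tilde\Psi$ are uniformly bounded, so subtracting the two Duhamel identities, applying the semigroup growth bound \eqref{Est:Semigroup_L}, and running Gronwall's inequality on the scalar quantity $\|\Psi(\tau)-\tilde\Psi(\tau)\|$ forces $\Psi=\tilde\Psi$ on $[0,\tau_0]$. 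Since $\tau_0$ is arbitrary, uniqueness on $[0,\infty)$ follows.

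The main obstacle I anticipate is precisely this uniqueness step, because Lemma \ref{lemma:nonlinL2} was only stated on the unit ball $\mc B_1$ while a competing continuous solution is merely locally bounded. The fix is essentially bookkeeping: one rereads the proof of Lemma \ref{lemma:nonlinL2} and retains the $\langle\|u_j\|_{H^1}\rangle^{p-2}$ factors to obtain a Lipschitz constant on each ball of radius $R$, after which the Gronwall step is entirely standard. Beyond this, the theorem is a direct corollary of the machinery already assembled in the previous subsections.
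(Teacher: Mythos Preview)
Your proposal is correct and follows the same route as the paper: existence and the decay bound come directly from combining Theorem \ref{Th:GlobalEx_ModEq} with Lemma \ref{Le:CorrectionTerm} (which is exactly what the paper says), and for uniqueness in $C([0,\infty),\mc H)$ the paper simply refers to \cite{DonSch12}, Theorem 4.11, whereas you spell out the standard local-Lipschitz-plus-Gronwall argument. Your observation that the estimates in the proof of Lemma \ref{lemma:nonlinL2} actually carry the $\langle\|u_j\|_{H^1}\rangle^{p-2}$ factors, and hence yield a Lipschitz bound on every ball of $\mc H$, is precisely the bookkeeping needed to make that argument go through.
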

\begin{proof}
The existence of a unique solution in $\mc X_{\delta}$ is a direct consequence of
Theorem \ref{Th:GlobalEx_ModEq} and Lemma \ref{Le:CorrectionTerm}. 
The stated decay estimate follows from the definition of the space $\mc X_\delta$.
For the uniqueness of the solution in the space
$C([0,\infty),\mc H)$ we refer the reader to the proof of Theorem $4.11$ in \cite{DonSch12}.
\end{proof}

\subsection{Proof of the main theorem}
\begin{proof}
Choose $\varepsilon >0$ such that $\mu_p = \frac{2}{p-1} - \varepsilon >0$ and let the initial data $(f,g)$ satisfy
the assumptions of Theorem \ref{Th:Main}. We set
\[ v_1(r) := r g(r) - \tfrac{2 r}{p-1} \kappa_p^{\frac{1}{p-1}}, \quad v_2(r) := f(r) + rf'(r) -  \kappa_p^{\frac{1}{p-1}}.\]
By definition of the respective function spaces it is easy to see that 
\[ \|\bs v \|_{\mathfrak H} = \| (f,g) - (\psi^1(0,\cdot),\psi_t^1(0,\cdot)) \|_{\mc E^h(\frac32)}\]
where $\bs v = (v_1,v_2)$. Hence, the smallness condition in Theorem \ref{Th:Main} implies that $\bs v$ is so small that
Theorem  \ref{Th:Global_existence} applies. We infer that for a certain value $T >0$ close to one (depending on $\bs v$)
we obtain a unique mild solution
$\Psi \in C([0,\infty), \mc H)$ of \eqref{Eq:Abstract_NonlinearDiff}. Setting $\Phi(\tau):=\Psi(\tau + \log T)$ yields a 
unique mild solution $\Phi \in C((-\log T,\infty], \mc H)$ of 
\begin{align*}
\left \{ \begin{array}{l}
\frac{d}{d\tau}\Phi(\tau)=\bs L\Psi(\tau)  + \bs N(\Psi(\tau)) \mbox{ for }\tau>-\log T \\
\Phi(- \log T) =\bs U(\bs v,T)
\end{array} \right .
\end{align*}
satisfying
\[ \| \Phi(\tau) \| \leq C_{\varepsilon} e^{-\mu_p\tau} \]
for all $\tau \geq -\log T$ and a constant $C_{\varepsilon} > 0$.
By definition,
\[\Phi(\tau)(\rho) = (\phi_1(\tau,\rho),\phi_2(\tau, \rho))\]
is a solution of the original system \eqref{eq:nonlinear_firstorder_css}.
Using the identity \eqref{Eq:ReconstructField} we infer
\begin{align*}
\|(\psi(t,\cdot), &\psi_t(t,\cdot))-  (\psi^T(t,\cdot),\psi^T_t(t,\cdot))\|^2_{\mc E^h(T-t)} = \\
 & (T-t)^{-\frac{4}{p-1}} \int_0^{T-t} \Big (|\phi_1(-\textstyle{\log}(T-t),\tfrac{r}{T-t})|^2 \\
 &+ |\phi_2(-\textstyle{\log}(T-t),\tfrac{r}{T-t})|^2 \Big ) dr \\
 &+ (T-t)^{-\frac{2(p+1)}{p-1}} \int_0^{T-t} 
 \Big (|\partial_2 \phi_1(-\textstyle{\log}(T-t),\tfrac{r}{T-t})|^2 \\
 &+  |\partial_2 \phi_2(-\textstyle{\log}(T-t),\tfrac{r}{T-t})|^2 \Big )dr 
 \end{align*}
 and thus,
 \begin{align*}
 \|(&\psi(t,\cdot), \psi_t(t,\cdot))-  (\psi^T(t,\cdot),\psi^T_t(t,\cdot))\|^2_{\mc E^h(T-t)} = \\
 &= (T-t)^{\frac{p-5}{p-1}} \int_0^{1} \Big (|\phi_1(-\log(T-t),\rho)|^2 + |\phi_2(-\log(T-t),\rho)|^2 \Big )d\rho \\
 &\quad +(T-t)^{- \frac{p+3}{p-1}} \int_0^{1} \Big (|\partial_{\rho} \phi_1(-\log(T-t),\rho)|^2 \\
 &\quad + 
 |\partial_{\rho} \phi_2(-\log(T-t),\rho)|^2 \Big )d\rho \\
& \leq (T-t)^{- \frac{p+3}{p-1}}  \| \Phi(-\textstyle{\log}(T-t))\|^2 \leq C_{\varepsilon} (T-t)^{- \frac{p+3}{p-1} + \frac{4}{p-1} - 2\varepsilon}
\end{align*}
which implies the claimed estimate.
\end{proof}


\bibliography{wave}
\bibliographystyle{plain}
\end{document}